\newtheorem{theorem}{Theorem}[section]
\newtheorem{proposition}[theorem]{Proposition}
\newtheorem{lemma}[theorem]{Lemma}
\newtheorem{observation}[theorem]{Observation}
\theoremstyle{definition}
\newtheorem{definition}[theorem]{Definition}
\newtheorem{question}[theorem]{Question}
\newcommand{\U}{\mathcal U}
\newcommand{\w}{\omega}
\newcommand{\IP}{\mathbb P}
\newcommand{\B}{\mathcal{B}}
\newcommand{\C}{\mathcal{C}}
\newcommand{\A}{\mathcal{A}}
\newcommand{\F}{\mathcal{F}}
\newcommand{\V}{\mathcal{V}}
\newcommand{\bigvid}{\hat{\ \ }}
\newcommand{\uhr}{\upharpoonright}
\newcommand{\name}[1]{\dot{#1}}
\newcommand{\la}{\langle}
\newcommand{\ra}{\rangle}
\newcommand{\forces}{\Vdash}
\newcommand{\hot}{\mathfrak}
\newcommand{\nothing}[1]{}
\newcommand{\zrost}{\w^{\uparrow\w}}
\title[Selection Principles in the Laver and other models]{Selection Principles
in the Laver, Miller, and Sacks models}
\author{Lyubomyr Zdomskyy}
\address{Kurt G\"odel Research Center for Mathematical Logic,
University of Vienna, W\"ahringer Stra\ss e 25, A-1090 Wien,
Austria.}
\email{lzdomsky@gmail.com}
\urladdr{http://www.logic.univie.ac.at/\~{}lzdomsky/}
\subjclass[2010]{Primary: 03E35, 54D20. Secondary: 54C50, 03E05.}
\keywords{Menger space, Hurewicz space,
 Laver forcing, Miller forcing, Sacks forcing, fusion.}
\thanks{The author would
like to thank  the Austrian Science Fund FWF (Grants I 2374-N35   I 3709-N35)
 for generous support of this research.}
\begin{document}
\maketitle

\begin{abstract}
This article is devoted to the interplay between forcing with fusion
and  combinatorial covering properties. We discuss known instances of this interplay as well as present  a new one,
namely
that in the Laver model
 for the consistency of the Borel's conjecture,
the Hurewicz property is preserved by finite products of metrizable spaces.
\end{abstract}

\section{Introduction}

Combinatorial covering properties (or selection principles) arguably arose from the study
of special sets of reals. These  resolved many classical
questions in general topology and measure theory. As a result,
information about special sets of reals is included in standard
topology textbooks, such as Kuratowski's Topology \cite{Kur92}. The most
influential survey on special sets of reals is, probably, Miller's
chapter \cite{Mil84} in the Handbook of Set-Theoretic Topology. The
most recent monograph on this topic is written by Bukovsky, see
\cite{Buk11}. It complements nicely the classical book \cite{BarJud95} of
Bartoszynski and Judah. This theory still finds interesting applications
in general topology, see, e.g., \cite{HruMil??} for the interplay between
$\lambda$-sets and homogeneity.

A typical example of the evolution of special sets of reals into selection principles comes from
  \emph{strong measure zero} sets (SMZ sets in what follows):
$X\subset\mathbb R$ is SMZ if for every sequence
$\la\epsilon_n:n\in\w\ra$ of positive reals there exists a sequence
$\la a_n:n\in\w\ra$ of ``centers'' such that
$X\subset\bigcup_{n\in\w}B(a_n,\epsilon_n)$, where
$B(a,\epsilon)=\{x\in\mathbb R:|x-a|<\epsilon\}$. SMZ sets were
introduced by Borel ca. a century ago  who conjectured  \cite{Bor19}
that only countable sets have such property, i.e., that there are
only trivial examples of SMZ sets. This conjecture has been
consistently refuted by Sierpi\'{n}ski \cite{Sie28} in 1928, but the
question whether Borel's conjecture (BC in what follows) is
consistent was answered only in 1976 by Laver in his seminal paper
\cite{Lav76}. This outstanding result was the
first\footnote{According to our colleagues who worked in set theory
already in the 70s.} instance when a forcing, adding a real, was
iterated with countable supports without collapsing cardinals. This
work of Laver can be thought of as  a ``birth'' of fusion, the
latter being a kind of  a gentle weakening of the countable
completeness which allows  to add  new reals and nonetheless
``treat'' countably many dense sets with a single condition, and
thus it is also one of the motivations behind   Baumgartner's axiom
$A$  and  Shelah's theory  of proper forcing.

In 1938 Rothberger  worked on the question whether having  SMZ is  a topological property, see \cite{Ro38},
and introduced the following topological counterpart  of SMZ sets which is now known as Rothberger's property and is one of the most intensively studied
 covering properties:
  For every sequence $\la \U_n : n\in\omega\ra$
of open covers of $X$ there exists a sequence $\la U_n : n\in\omega \ra$ such that
 $U_n\in\U_n$ for all $n$ and the collection $\{U_n:n\in\omega\}$
is a cover of $X$. Three years later he has shown in \cite{Ro41} that $\hot b=\w_1$ implies the existence of a
SMZ set which is not Rothberger and hence that having SMZ is not a topological property.
Recall that $\hot b$ is the minimal cardinality of a
subset of $\w^\w$ which cannot be covered by a $\sigma$-compact one.
In the same paper he shows that the BC is equivalent to all
Rothberger metrizable spaces being countable. The Rothberger property is an example of strong
combinatorial covering properties  in the sense that one selects a \emph{single} element of each cover,
aiming at a ``diagonalizing'' cover of certain kind. These proved to be important
in the study of local properties of spaces of continuous functions, see \cite{Ark92} and references therein.

Another source of selection principles was the dimension theory, where the following property takes
its origin:
A topological space
$X$ has the  \emph{Menger} property (or, alternatively, is a Menger space)
 if for every sequence $\la \U_n : n\in\omega\ra$
of open covers of $X$ there exists a sequence $\la \V_n : n\in\omega \ra$ such that
each $\V_n$ is a finite subfamily of $\U_n$ and the collection $\{\cup \V_n:n\in\omega\}$
is a cover of $X$. This property was introduced by  Hurewicz, and the current name
(the Menger property) is used because Hurewicz
proved in   \cite{Hur25} that for metrizable spaces his property is equivalent to
one  property of a base considered by Menger in \cite{Men24}.
If in the definition above we additionally require that $\{\cup\V_n:n\in\w\}$
is a \emph{$\gamma$-cover} of $X$
(this means that the set $\{n\in\w:x\not\in\cup\V_n\}$ is finite for each $x\in X$),
then we obtain the definition of the \emph{Hurewicz  property}  introduced
in \cite{Hur27}. Each $\sigma$-compact space is obviously a Hurewicz space, and Hurewicz spaces have the
Menger property. Contrary to a conjecture of Hurewicz
the class of  metrizable spaces having the Hurewicz property
 appeared  to be much wider than the class of $\sigma$-compact spaces \cite[Theorem~5.1]{COC2}.
Moreover, there always exists a subspace of $\w^\w$ of cardinality $\hot b$ all of whose finite powers are
Hurewicz,  see \cite{BarShe01, BarTsa06}.
Thus we can say that there are always \emph{non-trivial} examples  of metrizable Hurewicz spaces, as contrasted
with the behaviour of Rothberger spaces in the Laver model.
The properties of Menger and Hurewicz are classical examples of \emph{weaker}
combinatorial covering properties, i.e., properties obtained by selecting finitely many
elements from each cover in a given sequence.

The Rothberger property and SMZ sets in the measure theory as well
as properties of Menger and Hurewicz in the dimension theory,
respectively, are not among the most intensively investigated ones
in these areas of mathematics. The reason is that   their study
required methods developed decades after their introduction. Namely,
one of the most fascinating features of these properties as well as
many other selection principles is their behavior in various models
of set theory, which makes forcing and cardinal characteristics one
of the main tools in the area. This became apparent from at least
the publication of   \cite{Lav76}, in  \cite{BarDow12, ChoHru???,
ChoRepZdo15, MilTsa10, SchTal10} one can find more recent works
along these lines. In what follows we discuss the affect of three
classical forcing notions, namely Laver, Miller, and Sacks ones, on
selection principles. The unifying theme of all of them is
\emph{fusion} based on the fact that the conditions are certain
trees on the natural numbers.
\medskip


\noindent The next after \cite{Lav76} application of the
\textbf{\emph{Laver forcing}} to combinatorial covering properties
seems to be \cite{Dow90} where it is proved that in the Laver
model, the properties $\alpha_1$ and $\alpha_2$ introduced by
Arkhangel'ski\u{\i} in \cite{Ark79} coincide. Recall that a space $X$
satisfies $\alpha_2$ (resp. $\alpha_1$) if for every countable
family $\mathcal S$ of non-trivial sequences converging to a point
$x\in X$ there exists a sequence $T$ converging to $x$ such that
$|T\cap S|=\w$ (resp. $S\subset^* T$) for all $S\in\mathcal S$.
How is this related to covering properties? It has been proved in  \cite{TsaZdo12} that  for a metrizable space
$X$, property $\alpha_1$ of $C_p(X)$, the space of all continuous
functions $f:X\to\mathbb R$ with the topology inherited from the
Tychonoff product $\mathbb R^X$, is equivalent to $X$ having the
Hurewicz property with respect to all countable Borel covers.  Due
to the fact that (unlike the open sets) Borel sets are closed under
countable intersections, this property is equivalent to
$S_1(\B_\Gamma,\B_\Gamma)$ asserting that for every sequence
$\la\U_n:n\in\w\ra$ of Borel $\gamma$-covers of $X$ there exists
$\la U_n\in\U_n:n\in\w\ra$ which is a $\gamma$-cover of $X$.
Replacing here ``Borel'' with ``clopen'' one gets the property
$S_1(\C_\Gamma,\C_\Gamma)$ equivalent to $\alpha_2$ of $C_p(X)$, see
\cite{Sak07}. One of the most intriguing conjectures along these lines
is due to Scheepers \cite{Sch99} and says that for subsets of
$\mathbb R$ the properties $S_1(\C_\Gamma,\C_\Gamma)$ and
$S_1(\Gamma,\Gamma)$ coincide, the definition of the latter obtained
from that of $S_1(\C_\Gamma,\C_\Gamma)$ be replacing ``clopen'' with
``open''. It follows from the above that for metrizable spaces we
have $S_1(\B_\Gamma,\B_\Gamma)=S_1(\C_\Gamma,\C_\Gamma)$ in the
Laver model, and by
$$ S_1(\B_\Gamma,\B_\Gamma)\to S_1(\Gamma,\Gamma)\to S_1(\C_\Gamma,\C_\Gamma)$$
 the  Scheepers' Conjecture is
true in the Laver model. However, it is totally unclear what
happens under CH. Moreover, if there is $X\subset\mathbb R$
satisfying $S_1(\C_\Gamma,\C_\Gamma)$ but not $S_1(\Gamma,\Gamma)$,
then this is also obviously true in any forcing extension with the
same reals, and hence we can get such a set in a model of CH. Thus
if the negative answer to Scheepers' question is consistent, it is
also consistent with CH.

Using the key  lemma of \cite{Lav76} allowing to analyze names for
reals in the Laver model, Miller and Tsaban
proved \cite{MilTsa10} that all $X\subset\mathbb R$ satisfying
$S_1(\B_\Gamma,\B_\Gamma)$ must have size $\leq \w_1<\hot b=\hot c$
in this model, and hence this is also true for $S_1(\C_\Gamma,\C_\Gamma)$.
This answered the question whether in ZFC there  exists a set of
reals satisfying $S_1(\C_\Gamma,\C_\Gamma)$ of size $\hot b$ in the
negative. Their application of Laver's analysis of names of reals
found further application to covering properties of products.

One of the basic questions about a topological property is whether
it is preserved by finite products.
 In case of combinatorial covering properties we know that  the strongest
 possible  negative result is consistent: Under CH there exist
 $X,Y\subset\mathbb R$ which have the $\gamma$-space property with
 respect to countable Borel covers\footnote{This property is stronger than $S_1(\B_\Gamma,\B_\Gamma)$
 and we refer the reader to \cite{MilTsaZso16} for its definition.},
 whose product $X\times Y$ is not
 Menger, see \cite[Theorem~3.2]{MilTsaZso16}. Thus the product of
 spaces with the strongest combinatorial covering  property considered thus far
 might fail to have even the weakest one. This implies that no
 positive results for combinatorial covering properties can be obtained outright
 in ZFC.
  Unlike the vast majority of
 topological and combinatorial consequences under  CH, the latter
 one does not follow from any equality among cardinal
 characteristics of the continuum, see \cite{BarDow12} and the discussion on \cite[p. 2882]{MilTsaZso16}.
However, there are many other negative results stating that under
certain equality among cardinal
 characteristics (e.g., $\mathit{cov}(\mathcal N)=\mathit{cof}(\mathcal N)$, $\hot b=\hot
d$, etc.\footnote{We refer the reader to \cite{Bla10} for the
definitions and basic properties of cardinal characteristics of the
continuum which are mentioned but are not used in the proofs in this
article.}) there are spaces $X,Y\subset \mathbb R$ with some combinatorial covering
property such that $X\times Y$ is not Menger, see, e.g.,
\cite{Bab09, RepZdo10, TsaSch02}.

Regarding the positive results,
 until recently the most
unclear situation was with the  Hurewicz  property and the weaker
ones.
 There are two reasons why a product of Hurewicz spaces
$X,Y$ can fail to be  Hurewicz. In the first place, $X\times
Y$ may simply fail to be a Lindel\"of space, i.e., it might have an
open cover $\U$ without countable subcover.  This may indeed happen:
in ZFC there are two normal spaces $X,Y$ with a covering property
much stronger than the Hurewicz one such that $X\times Y$ does not
have the  Lindel\"of property, see \cite[Section 3]{Tod95}. However,
the above situation becomes impossible if we restrict our attention
to metrizable spaces. This second case
 turned out to be  sensitive to
the ambient set-theoretic universe: under CH there exists a Hurewicz space
whose square is not Menger, see \cite[Theorem~2.12]{COC2}. The above result has been achieved by
a transfinite construction of length $\w_1$, using the combinatorics of the ideal of
 measure zero  subsets of reals. This combinatorics turned out
 \cite[Theorem~43]{TsaSch02} to require
 much weaker set-theoretic assumptions  than CH.
 In particular, under the Martin
Axiom there are Hurewicz subspaces of the irrationals whose product is not Menger.

The following theorem, which is the main result which we are going
to prove in this article, shows that an additional assumption in the
results from \cite{COC2,TsaSch02} mentioned above is really needed.

\begin{theorem} \label{main}
In the Laver model for the consistency of the Borel's conjecture,
the  Hurewicz  property is preserved by finite products of
metrizable spaces. Consequently, the product of any two Hurewicz
 spaces is Hurewicz if it is Lindel\"of.
\end{theorem}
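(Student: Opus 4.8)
The plan is to reduce the theorem to a purely combinatorial assertion about continuous images in $\w^\w$ and then to prove that assertion by a fusion analysis of names tailored to the Laver iteration. First I would record the routine reductions. A Hurewicz space is Lindel\"of, and a Lindel\"of metrizable space is separable and hence embeds in a Polish space, so it suffices to treat separable metrizable $X,Y$; by an obvious induction it is enough to handle a single product $X\times Y$. I would then invoke the classical combinatorial characterization of the Hurewicz property $\uga$: a separable metrizable $Z$ is Hurewicz if and only if every continuous image of $Z$ in $\w^\w$ is bounded with respect to $\leq^*$. Since $X\times Y$ is again separable metrizable, the theorem reduces to the following claim, to be established in the Laver model: \emph{for all Hurewicz $X,Y$ and every continuous $\vp\colon X\times Y\to\w^\w$, the image $\vp(X\times Y)$ is $\leq^*$-bounded.}

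The final ``Consequently'' clause needs no new idea and I would dispose of it first. Given Hurewicz (Tychonoff) spaces $X,Y$ with $X\times Y$ Lindel\"of and a sequence $\la\cU_n:n\in\w\ra$ of open covers of $X\times Y$, refine each $\cU_n$ to a countable cover by cozero boxes $U\times V$ (possible by Lindel\"ofness). Let $p\colon X\to M_X$ and $q\colon Y\to M_Y$ be the canonical continuous surjections onto the separable metrizable spaces determined by the countably many first, resp.\ second, sides occurring in these boxes. Then $M_X,M_Y$ are Hurewicz as continuous images of Hurewicz spaces, each refined box is $(p\times q)^{-1}$ of an open box of $M_X\times M_Y$, and, crucially, $(p\times q)(X\times Y)=p(X)\times q(Y)=M_X\times M_Y$, so the descended families are genuine open covers of the target. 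The metrizable case applied to $M_X\times M_Y$ yields finite subfamilies with the $\gamma$-cover property, and pulling these back through $p\times q$ gives the required $\gamma$-cover of $X\times Y$. The point is that for the Hurewicz property one may refine to boxes, which separates the two coordinates and avoids any genuinely two-dimensional factorization.

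Now for the main claim in the Laver model $V[G]$, where $G$ is generic for the countable support iteration $\IP_{\w_2}$ of $\Lav$ over a model of CH. A first instinct is to reflect $X,Y,\vp$ into some intermediate extension $V[G_\alpha]$ and argue over the tail, which is again a Laver iteration with the Laver property. This works for the \emph{code} of $\vp$, a single real lying in some $V[G_\alpha]$, but it cannot be made to work for $X,Y$ themselves: Hurewicz sets in this model may have size $\bb=\cc=\w_2$, and since $\bb=\w_2$ every family of size $\leq\w_1$ is $\leq^*$-bounded, so witnessing that $\vp(X\times Y)$ is \emph{un}bounded requires $\w_2$ many points. Hence no reflection to a model of size $\w_1$ can certify unboundedness, and the statement must be treated as a genuine preservation property of the whole iteration. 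Accordingly I would argue with $\IP_{\w_2}$-names, assuming for contradiction that some condition forces $\vp(\dot X\times\dot Y)$ to be unbounded.

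The heart of the proof, and the step I expect to be the main obstacle, is to convert this two-parameter unboundedness into unboundedness of a \emph{single-factor} continuous image, contradicting the Hurewicz property of $X$ or of $Y$. The plan here is to use Laver's analysis of names for reals in $\IP_{\w_2}$ --- the key lemma of \cite{Lav76}, the same device exploited in \cite{MilTsa10} --- together with the Laver property, to run a fusion that reads off the values $\vp(x,y)(n)$ from finitely much of the generic at each level while keeping the second coordinate essentially frozen along the fusion. The dominating Laver reals added cofinally supply exactly the uniformization over the frozen coordinate that is impossible in ZFC; this is, in reverse, precisely what the CH examples of \cite{COC2,TsaSch02} exploit to \emph{break} the product, so one expects the Laver-specific combinatorics to be indispensable here. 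Carrying the fusion out so that the resulting one-variable image is simultaneously continuous and genuinely $\leq^*$-unbounded is the delicate point; once it is in place, the contradiction with the Hurewicz property of the relevant factor closes the argument and, via the two reductions above, proves the theorem.
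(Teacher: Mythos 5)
Your proposal has two genuine gaps, one in the reduction and one at the core. First, the characterization you invoke --- that a separable metrizable $Z$ is Hurewicz if and only if every continuous image of $Z$ in $\w^\w$ is $\leq^*$-bounded --- is false in the direction you need it. The space $\mathbb R^\w$ is connected, so every continuous image of it in $\w^\w$ is a singleton, yet it is Polish and not $\sigma$-compact, hence not Hurewicz. The ``bounded images imply Hurewicz'' direction requires zero-dimensionality (so that covers can be refined to clopen ones and the associated map into $\w^\w$ made continuous), and you never reduce $X\times Y$ to the zero-dimensional case; for general separable metrizable factors the product need not be zero-dimensional, and no easy reduction is available. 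The paper sidesteps this entirely by using the semifilter characterization of the Hurewicz property from \cite{Zdo05}, which applies to arbitrary Lindel\"of spaces and transports the whole problem to subspaces of $2^\w$; your cozero-box reduction of the ``Consequently'' clause is fine, but it only gets you to separable metrizable, not to where your combinatorial claim would apply.

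Second, and more seriously, the heart of your argument is not an argument: you describe an intention to ``run a fusion that reads off the values $\vp(x,y)(n)$ \ldots while keeping the second coordinate essentially frozen'' and then concede that carrying this out is ``the delicate point.'' That delicate point is precisely the theorem. The paper's actual mechanism is structurally different and worth comparing against your plan: it does \emph{not} convert two-parameter unboundedness into one-factor unboundedness. Instead it proves (Lemma~\ref{laver}, via Laver's Lemma~14 applied to a name for a putative bad point $\name{x}\in X$) that in the Laver model any $X\subseteq 2^\w$ with countably controlled complement satisfies the concentration property $(\dagger)$: for any assignment of $G_\delta$-sets to countable subsets of $X$, some $\w_1$-sized family of $\sigma$-compact selections covers $X$. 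Combined with the ZFC equivalence of the Hurewicz property with $\sigma$-compactly controlled complements (Proposition~\ref{prop_char}(1)$\Leftrightarrow$(4)), the $G_\delta$-covering Lemma~\ref{covering_g_delta}, and $\hot b=\w_2$, this yields productivity (Lemma~\ref{main1}). Note also that the paper \emph{does} reflect to an intermediate model $V[G_\alpha]$ along an $\w_1$-club --- not to certify unboundedness, as you rightly observe is impossible, but to certify a covering of $X$ by $\sigma$-compact sets coded one step later; your inference that no reflection argument can work led you away from the shape of proof that actually succeeds. Without a worked-out substitute for Lemma~\ref{laver}, your proposal does not establish the theorem.
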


Let us recall that for metrizable spaces, the separability is
equivalent to being Lindel\"of, see, e.g.,
\cite[Theorem~4.1.15]{Eng}.

This theorem is a further improvement of the main result of
\cite{RepZdo17} which states that product of two Hurewicz subspaces
of $\mathbb R$ is Menger in the Laver model. The conclusion of
Theorem~\ref{main} does not follow from
 Borel's conjecture:
If we  add  $\w_2$ many random reals  over the Laver model then
Borel's conjecture still holds by \cite[Section 8.3.B]{BarJud95}  and  we have
$\mathit{cov}(\mathcal N)=\mathit{cof}(\mathcal N)$,
and hence in this model
there exists  a Hurewicz set of reals  whose square is not Menger, see \cite{TsaSch02}.
Thus  Borel's conjecture is consistent with the  existence of  a Hurewicz set of reals with even non-Menger square.

Theorem~\ref{main} seems to be an instance of a more general
phenomena, namely that proper posets with fusion affect the behavior
of combinatorial covering properties. This happens because sets of
reals with certain combinatorial covering properties are forced  to
have a rather clear structure, which   suffices  to prove  positive
preservation results. For instance, the core of the proof of
Theorem~\ref{main} is that Hurewicz subspaces  of the real line are
concentrated in a sense around their ``simpler'' subspaces.

Later we shall discuss  similar behavior of combinatorial covering
properties in the Miller and Sacks   models. Now, however, we would
like to present a ``side effect'' of the proof of Theorem~\ref{main}
for maximal almost disjoint families of subsets of $\w$.
%
Recall that an infinite $\A\subset[\w]^\w$ is called a \emph{mad
family}, if $|A_0\cap A_1|<\w$ for any distinct $A_0,A_1\in \A$, and
for every $B\in[\w]$ there exists $A\in\A$ such that
$|B\cap A|=\w$. In \cite[Theorem~2.1]{Bre98} Brendle constructed
under CH a mad family $\A$ on $\w$ such that the Mathias
forcing\footnote{Since we shall not analyze this poset directly but
rather use certain topological characterizations, we refer the
reader to, e.g., \cite{Bre98} for its definition.} $M_{\F(\A)}$
associated to the filter
$$ \F(\A)=\{F\subset\w:\exists \A'\in[\A]^{<\w}(\w\setminus\cup\A'\subset^* F)\}$$
adds a dominating real. In the same paper Brendle asked whether such
a mad family can be constructed outright in ZFC. This question has
been answered in the affirmative in \cite{GuzHruMar??} and later
independently also in \cite{ChoRepZdo15} using different methods.
However, the mad families constructed there had topological copies
of the Cantor set inside, and hence by a  simple absoluteness
argument are destroyed (i.e., are not maximal any more) by any
forcing adding new reals. One can refine Brendle's question in the
following way: Suppose that a mad family $\A$ cannot be destroyed by
some very ``mild''  forcing $\IP$, i.e., it remains maximal in
$V^{\IP}$, must then $M_{\F(\A)}$ add dominating reals? This
refinement makes sense  in cases when one is interested in
 destroying mad families  without adding dominating
reals and takes some $\IP$ which does not  add them. Indeed, if $\A$
is already destroyed by $\IP$, there is no need to use its Mathias
forcing for its destruction in a hypothetic construction of a model
where, e.g., $\hot b$ should stay small.  In what follows we
concentrate on the case  $\IP$ being the Cohen forcing $\mathbb C$.
Mad families $\A$ which remain maximal in $V^{\mathbb C}$ will be
called Cohen-indestructible. The following theorem has been proved
in \cite{AurZdo??}

\begin{theorem} \label{p_equals_c}
$\hot p=\mathit{cov}(\mathcal{N})=\hot c$ implies the existence of a
Cohen-indestructible mad family $\A$ such that $M_{\F(\A)}$ adds a
dominating real.
\end{theorem}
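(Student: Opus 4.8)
The plan is to construct the mad family $\A$ by a transfinite recursion of length $\cc=\hot c$, building it as a union of an increasing chain $\la \A_\alpha:\alpha<\cc\ra$ of almost disjoint families, where at each step we enumerate and handle one ``task''. Under $\hot p=\cov(\mathcal N)=\hot c$ we have a bookkeeping of length $\cc$ that allows us to simultaneously diagonalize against three kinds of requirements: (i) maximality in the ground model, ensuring $\A$ is indeed mad in $V$; (ii) Cohen-indestructibility, i.e.\ we must guarantee that $\A$ remains maximal after adding a Cohen real; and (iii) the requirement that $M_{\F(\A)}$ add a dominating real, which by the topological characterizations alluded to in the excerpt we reformulate as a property of the filter $\F(\A)$. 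The hypothesis $\hot p=\cc$ is what guarantees that the almost disjoint family we are building can be extended at limit stages, since an a.d.\ family of size $<\hot p\le\cc$ which is not yet maximal admits a pseudo-intersection-style extension; more precisely, any tower/filter of size $<\hot p$ that we accumulate along the way has the needed extension.

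For Cohen-indestructibility I would use a combinatorial characterization: $\A$ remains mad after forcing with $\mathbb C$ iff for every nice $\mathbb C$-name $\name B$ for an infinite subset of $\w$ there is some $A\in\A$ with $\Vdash_{\mathbb C}|\name B\cap A|=\w$. Since $\mathbb C$ is countable, such names are coded by reals, so there are exactly $\cc$ of them; at the stage where we treat a given name $\name B$, if no $A$ already in $\A_\alpha$ meets it infinitely in a forced way, we adjoin a new set $A_\alpha$ built to have infinite intersection with $\name B$ in every extension while staying almost disjoint from $\A_\alpha$. The role of $\cov(\mathcal N)=\cc$ here is to let us ``capture'' these Cohen names: one diagonalizes against fewer than $\cov(\mathcal N)$ many measure-zero sets (equivalently, localizes the generic) so that the new element $A_\alpha$ can be chosen to defeat the name. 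This is the step where the two cardinal-characteristic hypotheses interact, and I expect it to be the technical heart.

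For the dominating-real requirement I would invoke the topological reformulation the paper promises: rather than analyzing $M_{\F(\A)}$ directly, one shows that the filter $\F(\A)$ (or its associated space) fails the relevant Menger/Hurewicz-type covering property, which is exactly equivalent to the Mathias forcing $M_{\F(\A)}$ adding a dominating real. So the task at each relevant stage is to kill one potential witness to the covering property of $\F(\A)$ by choosing the next generator $A_\alpha$ of $\A$ so as to make $\w\setminus A_\alpha$ escape a prescribed $\sigma$-compact set in $\w^\w$. Threading this non-domination diagonalization through the recursion, while simultaneously preserving almost disjointness and the Cohen-indestructibility commitments, is the main obstacle: the three families of requirements must be interleaved so that no new set $A_\alpha$ sabotages a commitment made at an earlier stage.

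Concretely, at stage $\alpha$ I would, according to the bookkeeping, either (a) if an uncovered ground-model or Cohen $\name B$ is assigned, add an $A_\alpha$ handling it, or (b) if a covering-property witness for $\F(\A)$ is assigned, add an $A_\alpha$ escaping the corresponding $\sigma$-compact set; in both cases $A_\alpha$ is chosen almost disjoint from $\A_\alpha$ using that $|\A_\alpha|<\cc=\hot p$. The consistency of all these choices rests on showing the three kinds of constraints are ``orthogonal enough'': a set escaping a given compact set can still be arranged to meet a prescribed Cohen name, because the non-domination condition constrains only the growth rate of $A_\alpha$ while the name-meeting condition constrains only its intersection pattern. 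Verifying that this orthogonality survives the whole length-$\cc$ recursion, and that the resulting $\A=\bigcup_{\alpha<\cc}\A_\alpha$ is genuinely maximal (not merely maximal against the enumerated tasks), is where the argument must be assembled with care.
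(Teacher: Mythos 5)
First, a caveat: the paper does not actually prove Theorem~\ref{p_equals_c}; it is quoted from the submitted paper [AurZdo??] (``Covering properties of $\w$-mad families''), so there is no in-paper proof to compare against. Judged on its own terms and against the surrounding remarks in the paper, your length-$\mathfrak c$ bookkeeping recursion is the right general shape, but two of your key steps are off in ways that matter. (i) You assign $\cov(\mathcal N)=\mathfrak c$ to the task of ``capturing Cohen names.'' Cohen forcing is tied to the \emph{meager} ideal, not the null ideal, and you give no mechanism by which covering the reals by few null sets localizes a Cohen name. The paper's own remarks point to the intended division of labour: the construction ``actually gives an $\w$-mad family,'' $\w$-mad families are automatically Cohen-indestructible (Malykhin, Kurili\'c), and $\w$-mad families already exist under $\hot b=\hot c$. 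So $\hot p=\mathfrak c$ (via Bell's theorem for $\sigma$-centered posets) is what lets you, at each stage, find a single $A_\alpha$ almost disjoint from $\A_\alpha$ that meets each of countably many prescribed sets in $\F(\A_\alpha)^+$ infinitely -- this handles maximality and Cohen-indestructibility at once -- while $\cov(\mathcal N)=\mathfrak c$ must be reserved for the dominating-real requirement, which is exactly the part your sketch treats most lightly.

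(ii) Your formulation of that requirement is too weak. By the characterization the paper quotes from [ChoRepZdo15], $M_{\F(\A)}$ adds a dominating real iff $\F(\A)$ is \emph{not Menger}, whereas arranging that elements of the filter ``escape a prescribed $\sigma$-compact set'' only produces an unbounded continuous image, i.e., witnesses that $\F(\A)$ is not \emph{Hurewicz}; that corresponds to $M_{\F(\A)}$ failing to preserve some ground-model unbounded family, not to adding a dominating real. What you actually need is a domination requirement: for every $g\in\w^\w$ some finite $\A'\subseteq\A$ must have $\w\setminus\bigcup\A'$ so sparse that its increasing enumeration dominates $g$ (equivalently, the non-$P^+$ condition on $\F(\A)^{<\w}$). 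This is where your ``orthogonality'' claim, asserted without argument, is precisely the crux: once one $A_\alpha$ is chosen with very thin complement, every later generator is almost contained in that thin set, and it must simultaneously be spread out enough to meet countably many prescribed positive sets infinitely often (for $\w$-madness) and dense enough inside the remaining thin set to push the enumeration of the new complement above the next $g$. Reconciling these is the technical heart of the theorem and is presumably exactly where $\cov(\mathcal N)=\mathfrak c$ enters via a random-real-style selection of $A_\alpha$; as written, your proposal does not supply that argument.
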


Recall from \cite{Kur01} that a mad family $\A$ is called
\emph{$\w$-mad} if for every sequence $\la X_n:n\in\w\ra$ of
elements of $\F(\A)^+$ there exists $A\in\A$ such that $|A\cap
X_n|=\w$ for all $n$. Cohen-indestructible mad families are closely
related to $\w$-mad ones, see \cite{Mal90} or
\cite[Theorem~4]{Kur01}: Every $\w$-mad family is
Cohen-indestructible, and if $\A$ is Cohen-indestructible, then for
every $X\in\F(\A)^+$ there exists $Y\subset X, $ $Y\in\F(\A)^+$,
such that $\A\uhr Y=\{A\cap Y:A\in\A, A\cap Y$ is infinite$\}$ is
$\w$-mad as a mad family on $Y$.

The proof of Theorem~\ref{p_equals_c}  actually gives an $\w$-mad
family. The next theorem shows that $\hot b=\hot c$ would not
suffice in Theorem~\ref{p_equals_c} in a strong sense.

\begin{theorem} \label{main_mad}
In the Laver model for the consistency of the Borel conjecture, for
every $\w$-mad family $\A$ the poset $M_{\F(\A)}$ preserves all
ground model unbounded sets. In particular, if $\A$ is
Cohen-indestructible, then there exists $X\in\F(\A)^+$ such that
$M_{\F(\A)\uhr X}$ preserves all ground model unbounded sets, where
$\F(\A)\uhr X$ denotes the filter on $\w$ generated by the centered
family $\{F\cap X:F\in\F(\A)\}$.
\end{theorem}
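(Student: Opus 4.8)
The plan is to translate the forcing statement into a combinatorial covering property of $\F(\A)$ and then to verify the latter in the Laver model. Recall first the general criterion: a forcing preserves all ground model unbounded families if and only if, for every name $\dot h$ for an element of $\w^\w$ and every condition $p$, the set $\{f\in\w^\w : p\Vdash \check f\leq^* \dot h\}$ is bounded in the ground model; equivalently, no condition forces an unbounded set of old reals to be dominated by a single new real. For Mathias forcing this is governed by the filter: by the dictionary between combinatorial covering properties of a filter $\F$ and the behaviour of $M_\F$ (as in \cite{ChoRepZdo15}, where, for instance, $\F$ is Menger exactly when $M_\F$ adds no dominating real), it suffices to show that in the Laver model $\F(\A)$, regarded as a subspace of $2^\w$, has the Hurewicz property for every $\w$-mad $\A$. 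We shall use the classical fact that a separable metrizable space is Hurewicz precisely when each of its continuous images in $\w^\w$ is $\leq^*$-bounded.

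The core is the boundedness of continuous images in the Laver model. Fix an $\w$-mad family $\A$ and a continuous map $\Phi\colon\F(\A)\to\w^\w$; we must produce a single $g\in\w^\w$ with $\Phi(F)\leq^* g$ for all $F\in\F(\A)$. Two ingredients combine. First, $\w$-madness supplies selection for countable configurations: for any sequence $\la X_n:n\in\w\ra$ of $\F(\A)$-positive sets there is $A\in\A$ with $|A\cap X_n|=\w$ for every $n$, which is exactly what is needed to diagonalise a countable family of basic clopen pieces of $\F(\A)$ against $\Phi$. Second --- and this is where the model enters --- one invokes Laver's analysis of names for reals together with the fusion arising from the tree structure of the conditions, exactly as in the proof of Theorem~\ref{main}: every real of the Laver model appears at a countable stage of the iteration, and the domination contributed by the Laver reals is controlled enough that $\Phi[\F(\A)]$ is concentrated around, and hence dominated by, a function living in an intermediate model. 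This is the same ``concentration around simpler subspaces'' that drives Theorem~\ref{main}, now applied to $\F(\A)$ in place of a Hurewicz subspace of the line.

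The delicate point is precisely this second ingredient, and it also explains why $\w$-madness alone cannot suffice: by Theorem~\ref{p_equals_c} it is consistent that an $\w$-mad --- indeed Cohen-indestructible --- family has $M_{\F(\A)}$ adding a dominating real, so that $\F(\A)$ is not even Menger. Hence the Hurewicz property of $\F(\A)$ is a genuinely model-theoretic phenomenon that must be extracted from the Laver iteration; moreover $\F(\A)$ may have size $\cc=\w_2$, so the boundedness of its continuous images cannot come from cardinality and has to be produced by the name analysis. Assembling the finitary selections granted by $\w$-madness, across all stages of the iteration, into one dominating function is the crux of the argument.

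The final clause is a formal consequence of the main assertion and the fact recalled before the statement (see \cite{Mal90} and \cite[Theorem~4]{Kur01}): since $\A$ is Cohen-indestructible, applying that fact to $\w\in\F(\A)^+$ yields $X\in\F(\A)^+$ for which $\A\uhr X$ is $\w$-mad as a mad family on $X$. As $M_{\F(\A)\uhr X}$ is then the Mathias forcing associated with the $\w$-mad family $\A\uhr X$, the already established case of the theorem gives that it preserves all ground model unbounded sets, as required.
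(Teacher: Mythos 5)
Your overall skeleton is right: reduce via the dictionary of \cite{ChoRepZdo15} to showing that $\F(\A)$ is Hurewicz in the Laver model, and derive the ``in particular'' clause from the fact that a Cohen-indestructible family restricts to an $\w$-mad one on some positive set. The final paragraph is fine. But the core of the argument --- why $\F(\A)$ is Hurewicz for $\w$-mad $\A$ in the Laver model --- is not actually carried out; you explicitly defer it to ``Laver's analysis of names \dots exactly as in the proof of Theorem~\ref{main}'' and then name the assembly of the $\w$-madness selections into a dominating function as ``the crux.'' That crux is precisely what is missing. The machinery of Theorem~\ref{main} (Lemma~\ref{laver} and Proposition~\ref{prop_char}) does not apply to an arbitrary subspace of $2^\w$: it takes as \emph{input} the hypothesis that the complement of the space is countably controlled, and produces the Hurewicz property as output. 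So to invoke it you must first verify that hypothesis for $\F(\A)$, and nothing in your write-up does so; in particular, no new name analysis or fusion is needed beyond what is already packaged in Proposition~\ref{prop_char}.

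Concretely, the missing step is the following purely combinatorial (ZFC) observation, which is where $\w$-madness is genuinely used. Identify $\F(\A)$ with $\mathcal P(\w)\setminus\F(\A)^+$ via $Z\mapsto\w\setminus Z$; by Proposition~\ref{prop_char}\,(3)$\Rightarrow$(1) it then suffices to show that $\F(\A)^+$ is countably controlled. Given a countable $\mathcal X\subset\F(\A)^+$, repeated application of $\w$-madness yields a countably infinite $\A_0\subset\A$ with $|A\cap X|=\w$ for all $A\in\A_0$ and $X\in\mathcal X$, and then
$$\{Z\in\mathcal P(\w):\forall A\in\A_0\,(|A\cap Z|=\w)\}$$
is a $G_\delta$ set containing $\mathcal X$ and contained in $\F(\A)^+$ (for any finite $\A'\subset\A$ pick $A\in\A_0\setminus\A'$; almost disjointness gives that $Z\setminus\bigcup\A'$ is infinite). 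Your alternative route through boundedness of continuous images in $\w^\w$ would require reproving an analogue of Lemma~\ref{laver} in that language, which you do not attempt; as written, the proposal asserts rather than proves the one step that distinguishes $\w$-mad families (whose duals become Hurewicz in the Laver model) from the families of Theorem~\ref{p_equals_c} (whose duals need not even be Menger).
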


Theorem~\ref{main_mad} improves our earlier result proved in
\cite{AurZdo??} asserting, under the same premises, that
$M_{\F(\A)\uhr X}$ keeps the ground model unbounded.
  In our
proof of Theorem~\ref{main_mad}  we shall not work with the Mathias
forcing directly, but rather use the following characterization
obtained in \cite{ChoRepZdo15}: For a filter $\F$ on $\w$ the poset
$M_{\F}$ adds no dominating reals (resp. preserves all ground model
unbounded sets) iff $\F$ has the Menger (resp. Hurewicz) covering
property when considered with the topology inherited from $\mathcal
P(\w)$, which is identified with the Cantor space $2^\w$ via
characteristic functions. Thus Theorem~\ref{main_mad} says that if
$A$ is an $\w$-mad family, then $\F(\A)$ is Hurewicz, whereas its
predecessor from \cite{AurZdo??} gave only the Menger property.
Let us note  that there are ZFC examples of Menger
non-Hurewicz filters on $\w$, see \cite{RepZdoZha14}.
 Theorem~\ref{main_mad} also intuitively
explains why the open question whether there exists an $\w$-mad
family outright in ZFC (it is known and rather easy that they exist
under $\hot b=\hot c$, e.g., in the Laver model) is so difficult: Any
such ZFC example should ``often''  give a mad family whose dual filter is Hurewicz,
and it is even unknown how to get one with the Menger filter
in ZFC, see \cite{GuzHruMar??}. This motivates the following
\begin{question}
Is there a ZFC example of a mad family $A$ such that $\F(\A)$ is
Hurewicz?
\end{question}
If the answer to this problem is affirmative, then it should
probably require some new ideas since with the existing methods it
is unclear even how to get a Menger one. On the other hand, a negative answer might
be surprisingly easy.
\medskip


\noindent Superficially the conditions in the \textbf{\emph{Miller
forcing}} introduced in \cite{Mil83} are very similar to those in the Laver one, the only
difference being that the splitting does not have to occur so often.
However, these posets have completely different affects on the
behavior of selection principles discussed above. In this context the Miller model has
been first investigated in \cite{Zdo05}, where it has been proved
that for sets of reals, the Rothberger property implies the Hurewicz
one in this model. This is of course  also true in the Laver
model because the Borel's conjecture holds there. On the contrary,
in the Miller model there are uncountable Rothberger spaces
(see \cite{OreTsa11} for a much more stronger
result and references therein), so here this implication holds for
different, qualitative (and not cardinality) reasons. Let us note that, e.g., under CH there are
Rothberger subspaces of $\mathbb R$ which are not Hurewicz, e.g.,
Luzin sets, see \cite{Sch96}.

Another result proved in \cite{Zdo05} is that every Menger space in
the Miller model is \emph{Scheepers}, where the latter property is
defined in the same way as the Menger one, with the following
additional requirement: The selected sequence $\{\cup\V_n:n\in\w\}$
should be an $\w$-cover of $X$, which by the definition means that  for every
finite $F\subset X$ there exists $n$ such that $F\subset \cup\V_n$.
It is rather straightforward to check that if all finite powers of
$X$ have the Menger property, then $X$ is Scheepers. Thus the Miller
model was a natural place to address the question
 whether the Menger
property is preserved by finite products of metrizable spaces. The
following theorem  is the main result of \cite{Zdo18}.

\begin{theorem} \label{main_mil}
In the Miller model, the product of any two Menger spaces is Menger
provided that it is Lindel\"of. In particular, in this model the
product of any two Menger metrizable spaces is Menger.
\end{theorem}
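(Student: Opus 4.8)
The plan is to prove Theorem~\ref{main_mil} concerning the Miller model.

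The plan is to peel off the soft reductions, isolate the genuinely combinatorial statement, and then attack it with a fusion argument on superperfect trees. For the ``in particular'' clause, note that a metrizable Menger space is Lindel\"of, hence separable and second countable, so the product of two such spaces is again second countable and therefore Lindel\"of; thus the metrizable assertion is a special case of the first one. For the general statement I use the standard fact that a Lindel\"of space $Z$ is Menger if and only if every continuous image of $Z$ in $\omega^\omega$ is \emph{non-dominating}, i.e. for each continuous $f\colon Z\to\omega^\omega$ there is $g\in\omega^\omega$ with $g\not\le^* f(z)$ for all $z\in Z$. Hence it suffices to fix a continuous $\Phi\colon X\times Y\to\omega^\omega$ and prove that $\Phi(X\times Y)$ is non-dominating. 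Here the hypothesis that $X$ and $Y$ are Menger will be used through \cite{Zdo05}: in the Miller model every Menger space is Scheepers, and so all finite powers $X^n$ and $Y^n$ are Menger, equivalently every continuous image of $X^n$ or $Y^n$ in $\omega^\omega$ is non-dominating. This finite-power information is what lets one handle the two coordinates of $\Phi$ simultaneously.

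The first case is cardinality-theoretic and soft. In the Miller model $\bb=\aleph_1<\dd=\cc=\aleph_2$; any space of size $\le\aleph_1$ has all of its continuous images in $\omega^\omega$ of size $<\dd$, so these images are non-dominating and the space is Menger. Thus if both factors have size $\le\aleph_1$ there is nothing to prove, and we may assume at least one factor, say $X$, has size $\aleph_2$. For the main case I realise the model as a countable support iteration $\mathbb P_{\omega_2}$ of superperfect-tree (Miller) forcing over a ground model of CH, and reflect the data: by properness together with the $\aleph_2$-chain condition of $\mathbb P_{\omega_2}$ there are stationarily many $\alpha<\omega_2$ for which a name coding $\Phi$, as well as sufficiently large portions of $X$ and $Y$, live in $V[G_\alpha]$, and for which the quotient $\mathbb P_{[\alpha,\omega_2)}$ is again (forced to be) a countable support iteration of Miller forcing over $V[G_\alpha]$. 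One then only has to understand how the Miller reals added in the tail can enlarge $\Phi(X\times Y)$.

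The heart of the matter --- and the step I expect to be the main obstacle --- is the fusion argument controlling exactly this enlargement, and it is precisely here that CH is excluded: under CH there are Hurewicz sets with non-Menger square \cite[Theorem~2.12]{COC2}, \cite{TsaSch02}, so no argument avoiding the specific structure of the Miller model can succeed. Concretely I would argue by contradiction: assuming $\Phi(X\times Y)$ dominating, fix a putative dominating function produced by the generic and thin out a superperfect condition by a fusion so that, at each infinite-splitting node, the finitely many immediate successors explored contribute only finitely many coordinates to the values of $\Phi$. Because each such local contribution is a continuous image of a finite power of $X$ crossed with a finite power of $Y$, the Scheepers property furnished by \cite{Zdo05} guarantees that these contributions are jointly non-dominating, and the unboundedness of the Miller reals then lets the fusion diagonalise against domination, contradicting the assumption. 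This is the quantitative core: it says that a Menger space in the Miller model is, in the sense relevant to covers of products, concentrated around its ``finite-power-controlled'' part, mirroring the concentration phenomenon underlying Theorem~\ref{main} in the Laver model. Verifying that the fusion can be carried out while simultaneously bookkeeping the finite-power data of both factors is where essentially all the work resides; the reductions and the cardinality case above are routine by comparison.
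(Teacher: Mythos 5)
Two of your reductions contain genuine errors before the combinatorial core is even reached. First, the ``standard fact'' that a Lindel\"of space $Z$ is Menger if and only if every continuous image of $Z$ in $\omega^\omega$ is non-dominating is false for general Lindel\"of spaces, and it fails even for the metrizable clause: $\mathbb{R}^\omega$ is connected, separable metrizable, hence Lindel\"of, so every continuous image of it in the totally disconnected space $\omega^\omega$ is a singleton and in particular non-dominating, yet $\mathbb{R}^\omega$ is a non-$\sigma$-compact Polish space and therefore not Menger. The equivalence you invoke is valid for \emph{zero-dimensional} Lindel\"of spaces only; for arbitrary Lindel\"of spaces the reduction used in \cite{Zdo18} (and reproduced in this paper in the proof of Theorem~\ref{main}) goes through the semifilter characterization of \cite[Theorem~3]{Zdo05}, replacing continuous images in $\omega^\omega$ by the semifilters $\U_s(u,X)$ generated by the sets $I_s(x,u,X)$. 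Second, you invert an implication: the paper states that if all finite powers of $X$ are Menger then $X$ is Scheepers, and it is this direction, not its converse, that is straightforward. Deducing ``all finite powers $X^n$ and $Y^n$ are Menger'' from ``$X$ and $Y$ are Scheepers'' is unjustified; worse, what your fusion step actually needs is that continuous images of $X^m\times Y^n$ are non-dominating, i.e.\ that mixed products of $X$ and $Y$ are Menger --- which is precisely the statement being proved. As written, the load-bearing step of your argument is circular.

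Beyond these two errors, the combinatorial content is absent: you concede that ``essentially all the work resides'' in carrying out the fusion, but no precise claim about superperfect conditions, fronts of splitting nodes, or names for elements of $X\times Y$ is formulated, so there is nothing that can be checked. For calibration, the present paper does not prove Theorem~\ref{main_mil} at all; it cites \cite{Zdo18} and records that the proof there splits into a part using only the inequality $\hot u<\hot g$ (via the semifilter machinery of \cite{Zdo05}) and a part specific to the Miller iteration. Your outline reflects neither half. The correct shape of the Miller-specific half is closer to Lemma~\ref{laver} of this paper: one proves a concentration statement for Menger subspaces of $2^\omega$ by analyzing a name for a putative counterexample along a single condition, rather than by importing finite-power information about the factors.
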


Of course Theorem~\ref{main_mil} is not true in ZFC as there are
equalities between cardinal characteristics which yield even
Hurewicz sets of reals with non-Menger product, see our discussion
of products of Hurewicz spaces above.
 Surprisingly, there are also
inequalities between cardinal characteristics which imply that the
Menger property is not productive even for sets of reals, see
\cite{SzeTsa17}. Let us also note that in the Laver model (more
generally, under $\hot b=\hot d$)  the Menger property is not
preserved by products of metrizable spaces, see \cite{RepZdo10} and
\cite{SzeTsaZdo??} for more recent results along these lines.

As we can see in \cite{Zdo18}, a big part of the proof of
Theorem~\ref{main} (as well as all the results from \cite{Zdo05}
mentioned above) requires only the inequality $\hot u<\hot g$ which
holds in the Miller model. However, we do not know the answer to the
following

\begin{question} \label{heike}
Is the Menger property preserved by finite products of metrizable
spaces under $\hot u<\hot g$? If yes, can $\hot u<\hot g$ be
weakened to
 the Filter Dichotomy or NCF?
\end{question}

We refer the reader to \cite[\S~9]{Bla10} for corresponding
definitions. Let us note that it is not enough to assume $\hot
u<\hot d$ in Theorem~\ref{main_mil} as witnessed by the model
constructed in \cite{BlaShe89}, see \cite{SzeTsaZdo??}. Natural
places to look for a possible negative answer to
Question~\ref{heike} are models constructed in \cite{BlaShe87,
MilShe09, Mil??}. Better understanding of these models and possible
simplification of the methods used there would  be at least as
important as a  solution of Question~\ref{heike}.

%

What about the products of Hurewicz spaces in the Miller model? The
product of  finitely many Hurewicz subspaces of $2^\w$ is Menger by
Theorem~\ref{main_mil}, and thus the Miller model seemed for a while to be the
best candidate for a model where the Hurewicz property is preserved
by finite products of metrizable spaces. The next theorem proved in
\cite{RepZdo??} refutes this expectation.

\begin{theorem}  \label{main_gamma}
In the Miller model there are two $\gamma$-subspaces $X,Y$ of $2^\w$
such that $X\times Y$ is not Hurewicz. In particular, in this model
the Hurewicz property is not preserved by finite products of
metrizable spaces.
\end{theorem}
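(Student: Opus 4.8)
The plan is to reduce the statement to a combinatorial construction and then to carry that construction out inside the iteration. Recall Hurewicz's characterization (see \cite{COC2}): a separable metrizable space $Z$ is Hurewicz iff every continuous image of $Z$ in $\w^\w$ is bounded in $(\w^\w,\leq^*)$; dually, $Z$ is Menger iff no continuous image of $Z$ in $\w^\w$ is dominating. Since the $\gamma$-property $S_1(\Omega,\Gamma)$ implies the Hurewicz (indeed the Menger) property, it suffices to produce two $\gamma$-subsets $X,Y\sbst 2^\w$ together with a single continuous map $\Phi\colon X\times Y\to\w^\w$ whose image $\Phi[X\times Y]$ is $\leq^*$-unbounded. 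Note that, by Theorem~\ref{main_mil}, $X\times Y$ will automatically be Menger, so $\Phi[X\times Y]$ cannot be dominating; this is consistent with unboundedness and serves as a sanity check rather than an obstruction.

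The coupling $\Phi$ is the heart of the matter, and it cannot be a pointwise operation: a $\gamma$-set has bounded image under every continuous map, so if $X,Y\sbst\w^\w$ were themselves bounded then any coordinatewise combination of them would be bounded as well. Instead I would let $\Phi$ read off a large value only when its two arguments \emph{match} on a prescribed pattern. Concretely, fix an unbounded family $\{z_\xi:\xi<\w_1\}\sbst\w^\w$ which survives the iteration as unbounded --- for instance the set of ground model reals, which stays unbounded because the countable support iteration of Miller forcing adds no dominating real. Fix also pairwise distinguishable ``keys'' $k_\xi$ and a fixed continuous decoder $\Phi$ with the property that $\Phi(x,y)=z_\xi$ whenever $x,y$ both carry the key $k_\xi$ (with $z_\xi$ split between the two coordinates), and $\Phi(x,y)$ takes small, uniformly bounded values whenever the keys of $x$ and $y$ disagree. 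Building the points as $x_\xi=($key $k_\xi$ interleaved with the first half of $z_\xi$) and $y_\xi=($key $k_\xi$ interleaved with the second half of $z_\xi$) then yields $\Phi(x_\xi,y_\xi)=z_\xi$, so $\Phi[X\times Y]\spst\{z_\xi:\xi<\w_1\}$ is unbounded, while for a fixed $y_\eta$ the slice $x\mapsto\Phi(x,y_\eta)$ has image contained in a single $z_\eta$ together with bounded junk; thus the unboundedness is a genuinely two-dimensional phenomenon and does not reflect a failure of the Hurewicz property in either factor.

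It remains to make $X=\{x_\xi\}$ and $Y=\{y_\xi\}$ genuine $\gamma$-sets. This is where the specific structure of the Miller model must be used: uncountable $\gamma$-sets do not exist outright (here $\hot p=\w_1<\hot c$), but large $\gamma$-sets \emph{can} be built in the Miller model, and I would import and adapt the construction of \cite{OreTsa11} producing $\gamma$-subsets of $2^\w$ of size $\hot c$. The adaptation is to run that transfinite recursion of length $\w_2$ along the Miller iteration, at stage $\xi$ adjoining the two prescribed points $x_\xi,y_\xi$ and, using a fusion/genericity argument, thinning the relevant open $\omega$-covers to $\gamma$-covers of the $\w_1$-sized piece constructed so far; the bookkeeping runs over all ($\hot c$ many) open $\omega$-covers of $X$ and of $Y$.

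The main obstacle I anticipate is precisely the compatibility of these two demands. On one hand each factor must satisfy $S_1(\Omega,\Gamma)$, a very strong property whose verification requires the full genericity/fusion machinery of the Miller model; on the other hand the points are not free --- they are constrained to carry the keys and the halves of the prescribed $z_\xi$. One must check that this rigid encoding does not obstruct the fusion used to secure the $\gamma$-property, and, conversely, that the genericity securing $S_1(\Omega,\Gamma)$ does not inadvertently bound the target family $\{z_\xi\}$ (which is guaranteed by the ``no dominating real'' preservation of the iteration). Threading between ``each factor is $\gamma$'' and ``the diagonal of the product is unbounded,'' while remaining consistent with the product being Menger by Theorem~\ref{main_mil}, is the delicate core of the argument.
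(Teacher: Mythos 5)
Your reduction of ``$X\times Y$ is not Hurewicz'' to ``some continuous image of $X\times Y$ in $\w^\w$ is unbounded,'' and your idea of a coupling $\Phi$ that decodes an unbounded family $\{z_\xi:\xi<\w_1\}$ out of matching pairs $(x_\xi,y_\xi)$, are both sound and are indeed the mechanism behind the known examples: under CH one constructs $\gamma$-sets $X,Y$ such that a fixed continuous operation on $X\times Y$ (essentially a sum) covers a dominating, a fortiori unbounded, set of reals \cite[Theorem~3.2]{MilTsaZso16}. You are also right that unboundedness of the witness survives into the Miller extension, since the iteration keeps ground model families unbounded, while Theorem~\ref{main_mil} only blocks the image from being dominating, not from being unbounded.

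The genuine gap is the step you yourself flag as ``the delicate core'': making the two \emph{prescribed} families $\{x_\xi\}$, $\{y_\xi\}$ into $\gamma$-sets. Your plan to adapt the construction of \cite{OreTsa11} does not fit the situation: once the points are forced to carry the keys and the halves of the $z_\xi$, there is no freedom left to choose them during a recursion, so ``adjoining the two prescribed points at stage $\xi$ and thinning $\w$-covers by fusion'' is not a construction of a $\gamma$-set but an unproved verification that a fixed $\w_1$-sized set is $\gamma$; moreover the bookkeeping is internally inconsistent (you index the pairs by $\xi<\w_1$ but run a recursion of length $\w_2$ against $\hot c=\w_2$ many $\w$-covers of a set that is already completely determined). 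The actual proof in \cite{RepZdo??} avoids this entirely by a preservation theorem, which is the one ingredient missing from your proposal: one performs the whole coupled construction in the CH ground model, obtaining $\gamma$-sets $X,Y\in V$ with non-Menger product, and then proves that a countable support iteration of Miller forcing preserves the property of being a $\gamma$-subspace of $2^\w$ for ground model sets. That preservation argument (a fusion argument on Miller conditions analyzing names for $\w$-covers) is the real content of the theorem, and nothing in your proposal substitutes for it; without it, neither the ``adapt \cite{OreTsa11}'' route nor any other part of your sketch yields that $X$ and $Y$ are $\gamma$-sets in the Miller model.
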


$\gamma$-Spaces were introduced  in \cite{GerNag82} where it was
proved that a Tychonoff space
 $X$ is a $\gamma$-space if and only if $C_p(X)$  has the Frechet-Urysohn
property, i.e., for every $f\in C_p(X)$ and $A\subset C_p(X)$ with
$f\in\bar{A}$ there exists $\la f_n:n\in\w\ra\in A^\w$ converging to
$f$. It is well-known that $\gamma$-spaces have the Hurewicz
property in all finite powers, see, e.g., \cite[Th.~3.6 and Fig.
2]{COC2} and references therein.

The proof of Theorem~\ref{main_gamma} is based on the fact that if
$X\subset 2^\w$,  $X\in V $, and $X$ is a $\gamma$-space in $V$,
then $X$ remains a $\gamma$-space in the forcing extension by an
iteration of the Miller forcing with countable supports. Previously
this was only known for Cohen and random forcing, see
\cite{MilTsaZso16} and \cite{Sch10}, respectively.
 Let us note that
Cohen forcing  fails to preserve Hurewicz spaces, see the discussion
in \cite{MilTsaZso16} after Problem 4.1 therein. This motivates the
following
\begin{question}
Does  Miller forcing (resp. countable support iterations thereof) preserve the Hurewicz property of ground model
spaces which do not contain topological copies of $2^\w$?
 What about (Sierpi\'{n}ski) subspaces of $2^\w$, provided that they exist
 in the ground model\footnote{Recall
 that an uncountable $S\subset 2^\w$ is called a \emph{Sierpi\'{n}ski subspace}
  if $|S\cap Z|\leq\w$ for every $Z$ of Lebesgue measure $0$. It is known
  \cite[Theorem~2.10]{COC2}  that Sierpi\'{n}ski
  subspaces are Hurewicz.}?
\end{question}
Let us note that $(2^\w)^V$ is not Hurewicz in the Miller model. This can be shown in the same way as
that $(2^\w)^V$ is not Menger in the Sacks model. The latter fact is proved in \cite{BarDow12} and is attributed there
to A.~Miller.

\medskip


\noindent Unlike the Laver and Miller models, in the
\textbf{\textit{Sacks model}} introduced in \cite{Sac??} we have the
best possible non-preservation by products results: Countable
support iterations of the Sacks forcing preserve $\gamma$-subspaces
of $\mathbb R$ \cite{RepZdo??}, while being non-Menger is obviously
preserved by any forcing which does not add unbounded reals. Thus in
the Sacks model there exist $\gamma$-spaces $X,Y\subset 2^\w$ with
non-Menger product.

The new feature of the Sacks model is that there are rather few
Menger sets of reals in this model by the following result obtained
in \cite{GarMedZdo??}.

\begin{theorem}\label{main_sacks}
There are $\hot c=\w_2$ many Menger subspaces of $2^\w$ in the Sacks
model.
\end{theorem}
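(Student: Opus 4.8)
The plan is to establish the two inequalities $\geq\hot c$ and $\leq\hot c$ separately, the first being routine and the second carrying all the content. For the lower bound I would note that every closed subset of the compact space $2^\w$ is compact, hence $\sigma$-compact, hence Menger, and that there are exactly $\hot c$ closed subsets of $2^\w$ (they correspond to the subtrees of $2^{<\w}$); thus there are at least $\hot c$ Menger subspaces. The entire difficulty is therefore to show that in the Sacks model there are \emph{at most} $\hot c=\w_2$ Menger subspaces of $2^\w$.

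For the upper bound I would reduce everything to the following structural claim: in the Sacks model every Menger $X\sbst 2^\w$ is the union of at most $\w_1$ compact sets. Granting this, the counting is immediate. A compact (equivalently, since $2^\w$ is compact, closed) subset of $2^\w$ is coded by a real, so there are exactly $\hot c$ of them; hence the number of unions of $\leq\w_1$ compact sets is at most $\hot c^{\,\w_1}=\w_2^{\w_1}=2^{\w_1}=\w_2=\hot c$, where $2^{\w_1}=\w_2$ holds because the iteration has size $\w_2$, is $\w_2$-cc, and is taken over a model of GCH. Since a Menger set is recovered as the union of its code, this gives at most $\hot c$ Menger subspaces, and combined with the lower bound yields exactly $\hot c$.

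To prove the structural claim I would pass to the ground model, fix a name $\name X$ for the Menger set, and exploit the two features of the length-$\w_2$ countable support iteration $\IP_{\w_2}$ of Sacks forcing: it is $\w^\w$-bounding with the Sacks property — so $\dd=\w_1$, witnessed by the ground model reals, and every continuous $f\colon 2^\w\to\w^\w$ is coded by a real appearing in some intermediate model $V[G_\alpha]$, $\alpha<\w_2$ — and its conditions are perfect trees amenable to fusion. By the classical theorem of Hurewicz, $X$ is Menger if and only if no continuous image of $X$ in $\w^\w$ is dominating. The idea is to argue by contraposition: if $X$ fails to be covered by $\w_1$ compact sets — the extreme case being that $X$ is totally imperfect, so that every compact subset of $X$ is countable and the least number of compact sets covering $X$ equals $|X|=\w_2$ — then a fusion argument should produce a continuous map $f\colon X\to\w^\w$ whose image is dominating, contradicting Mengerness. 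Concretely, a ``spread out'' subset of $X$ of size $\w_2$ meeting every compact set in a countable piece ought, through the tree structure of the Sacks conditions, to be pushed forward onto a cofinal, hence dominating, subset of the ground model dominating family.

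The hard part is exactly this last fusion argument: one must extract from an uncountable, compact-covering-large Menger set a dominating continuous image, that is, rule out totally imperfect Menger sets of size $\w_2$ in the Sacks model. The delicate point is to simultaneously (i) use fusion to diagonalize against all ground model compact sets and thereby keep selecting genuinely ``new'' points of $X$ cofinally through the iteration, and (ii) arrange that the continuous map read off from this selection dominates, which is where the minimality of the Sacks reals and the $\w^\w$-bounding property must be combined with the Sacks property to control the images. Once the structural claim is secured, the theorem follows from the elementary cardinal arithmetic above.
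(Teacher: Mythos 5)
First, note that the paper itself offers no proof of Theorem~\ref{main_sacks} to compare against: it is quoted from \cite{GarMedZdo??} (Gartside--Medini--Zdomskyy, work in progress), so your attempt can only be judged on its own terms. Your outer skeleton is sound: the lower bound via closed subsets of $2^\w$ is correct, the cardinal arithmetic $\hot c^{\w_1}=2^{\w_1}=\w_2$ in the Sacks model is correct, and reducing everything to the structural claim that every Menger $X\sbst 2^\w$ is a union of at most $\w_1$ compact sets is exactly the right move (it is the analogue, for Menger sets under Sacks fusion, of what Lemma~\ref{laver} does for Hurewicz-type sets under Laver fusion). The characterization of Mengerness via non-dominating continuous images into $\w^\w$ is also the right tool.

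The genuine gap is that the structural claim is never proved: everything after ``I would argue by contraposition'' is a statement of intent (``ought to be pushed forward onto a dominating set''), and that fusion argument \emph{is} the theorem --- you have correctly located the difficulty but not resolved it. Moreover, the route you sketch has a concrete flaw: you reduce to the ``extreme case'' of a totally imperfect $X$ and then propose to work with a ``spread out'' subset of $X$ of size $\w_2$ meeting every compact set in a countable piece. But the Menger property is not hereditary for arbitrary subsets, so no contradiction can be extracted from properties of such a subset; the dominating continuous image must be defined on, and witnessed by, all of $X$. The standard way around this (and, as far as one can tell, the way the cited proof proceeds) is to work in the ground model with a name $\name{X}$ and a condition forcing some $\name{x}\in\name{X}$ to avoid every compact set coded in an intermediate model $V[G_{\alpha+1}]$ for $\alpha$ in a suitable $\w_1$-club, and then to run a fusion on the Sacks condition, using the Sacks property to confine the possible values of $\name{x}$ to finite sets at each level; these finite sets assemble into a compact set coded in $V[G_{\alpha+1}]$ that the condition forces $\name{x}$ to belong to --- a contradiction that never mentions dominating images at all. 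Your proposal would need this (or an equally concrete) fusion mechanism spelled out before it constitutes a proof; as written, the upper bound, which you rightly call ``all the content,'' is missing.
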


Let us note that by the methods developed in \cite{TsaZdo08} there
are at least $\hot c^{\hot d}$ (resp. $\hot c^{\hot b}$) many Menger
(resp. Hurewicz) subspaces of $2^\w$, and hence
Theorem~\ref{main_sacks} fails in any model of $\hot c=\hot d$
including the Laver and Miller one. However, its analogue for
Hurewicz spaces might still be true in the Miller model as $\hot c^{\hot b}=\w_2^{\w_1}=\w_2$ there. This
motivates the following

\begin{question} \label{q03}
How many Hurewicz spaces $X\subset 2^\w$ are there in the Miller
model? How many concentrated subsets of $2^\w$ are there in the
Miller model? In particular, is there a concentrated subset $X$ of
$2^\w$ of size $\w_2$ in the Miller model? If not, must every
Hurewicz subspace of $2^\w$ of size $\w_2$ contain a topological
copy of $2^\w$?
\end{question}

A subset $X\subset 2^\w$ is called  \emph{concentrated}
on a countable $Q\subset 2^\w$ if $|X\setminus U|\leq\w$ for any
open $U\supset Q.$ It is an easy exercise that if $X$ is
concentrated on $Q\subset X$ then $X$ is Rothberger, and hence in
the Miller model concentrated sets are Hurewicz \cite{Zdo05}, which
naturally relates the different parts of Question~\ref{q03}.

\section{Proofs of Theorems~\ref{main} and \ref{main_mad}.}

We shall first introduce several notions crucial for the proof of Theorem~\ref{main}.
The first two items of the following definition are due to A. Medini. The third one is an ad hoc notion suitable to our needs.

\begin{definition} \label{def01}
\begin{itemize}
\item $X\subset 2^\w$ is called \emph{countably controlled}
if for every $Q\in [X]^\w$ there exists a $G_\delta$-subset $R\supset Q$ of $ 2^\w$
such that $R\subset X$;
\item $X\subset 2^\w$ is called \emph{$\sigma$-compactly controlled}
if for every $\sigma$-compact $Q\subset X$ there exists a $G_\delta$-subset $R\supset Q$ of $ 2^\w$
such that $R\subset X$;
\item $X\subset 2^\w$ satisfies  property
$(\dagger)$ if for every function $R$ assigning to each countable
subset $Q$ of $X$ a $G_\delta$-subset $R(Q)\supset Q$ of $ 2^\w$,
there exists a family $\mathsf Q\subset [X]^\w$ of size $|\mathsf
Q|=\w_1$ and a mapping $K:\mathsf Q\to\mathcal P(2^\w)$ assigning to
 every $Q\in\mathsf Q$ a $\sigma$-compact subset $K(Q)$ of $R(Q)$, such
that
 $X\subset\bigcup_{Q\in\mathsf Q}K(Q)$.
\end{itemize}

\end{definition}
Let us note that under CH any $X\subset 2^\w$ satisfies $(\dagger)$.

As suggested by Definition~\ref{def01} in what follows we shall work a lot with $G_\delta$ subsets of
$ 2^\w$ containing certain countable sets, and hence we need to introduce some auxiliary notation.
Given   a countable sequence  $\la Q_n: n\in\w\ra$  of finite non-empty subsets of $2^\w$
and $u\in\zrost$, we set
$$ R(\la Q_n:n\in\w\ra,u):=\{x\in 2^\w:\exists^\infty n\in\w\exists q\in Q_n\:(|x-q|<1/u(n)) \}, $$
 where
$\exists^\infty$ stands for ``exists infinitely many''
and we identify $ 2^\w$ with the standard Cantor subset of $[0,1]$ whenever we use expressions of the form
$|a-b|$.  As usually, for $x,y\in\w^\w$  notation $x\leq^*y$ means $\{n:x(n)>y(n)\}$ is finite.

\begin{observation} \label{obs01}
Let $Q=\bigcup_{n\in\w}Q_n\subset 2^\w$ be a countable union of finite non-empty sets
and $R\supset Q$ be a $G_\delta$-set. Then there exists $u\in\zrost$
such that $ R(\la Q_n:n\in\w\ra,u)\subset R$
\end{observation}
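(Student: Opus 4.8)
The plan is to exploit the ``for infinitely many $n$'' occurring in the definition of $R(\la Q_n:n\in\w\ra,u)$ by \emph{matching the $n$-th shell of balls with the $n$-th open set} in a decreasing $G_\delta$-representation of $R$. First I would write $R=\bigcap_{k\in\w}W_k$ with each $W_k$ open in $2^\w$ and $W_0\spst W_1\spst\cdots$; this is possible after replacing each $W_k$ by $\bigcap_{i\le k}W_i$. Since $Q\sbst R$, every $W_k$ contains $Q$, and in particular $W_n\spst Q_n$ for each $n$.

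Next, for each $n$ I use the finiteness of $Q_n$ together with the openness of $W_n$: as $Q_n\sbst W_n$ and $Q_n$ is finite, there is $\e_n>0$ such that for every $q\in Q_n$ the ball of radius $\e_n$ around $q$ (in the metric inherited from $[0,1]$, under the agreed identification of $2^\w$ with the Cantor subset of $[0,1]$) is contained in $W_n$; one simply takes $\e_n$ to be the minimum over the finitely many $q\in Q_n$ of a radius witnessing $q\in W_n$. I then choose $u\in\zrost$ with $u(n)>1/\e_n$ for all $n$, enlarging the values if necessary to make $u$ strictly increasing; this is clearly possible by recursion.

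Finally I verify the containment $R(\la Q_n:n\in\w\ra,u)\sbst R$. Let $x\in R(\la Q_n:n\in\w\ra,u)$. By definition there are infinitely many $n$ for which some $q\in Q_n$ satisfies $|x-q|<1/u(n)<\e_n$; for each such $n$ the point $x$ lies in the $\e_n$-ball around $q$, hence $x\in W_n$. Given any $k$, I pick such an $n\ge k$ (possible since there are infinitely many of them); then $x\in W_n\sbst W_k$ by monotonicity of the $W_k$. As $k$ was arbitrary, $x\in\bigcap_{k\in\w}W_k=R$, as required. The only point that needs care is the bookkeeping of the indices: aligning the $n$-th layer of balls with $W_n$ and using the \emph{decreasing} sequence of open sets so that a single witness $n\ge k$ already forces membership in $W_k$; the rest is routine, and note that we only need this one inclusion, not that $R(\la Q_n:n\in\w\ra,u)$ contains $Q$.
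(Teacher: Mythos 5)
Your proof is correct and follows essentially the same route as the paper's: decompose $R$ as a countable intersection of open sets, shrink the radii around each finite set $Q_n$ so that the corresponding balls land inside the open sets, and use the $\exists^\infty$ in the definition of $R(\la Q_n:n\in\w\ra,u)$ to conclude membership in every open set. The only (cosmetic) difference is the bookkeeping: the paper keeps the open sets $O_m$ as given, produces one function $u_m$ per $m$, and takes any $u$ with $u_m\leq^* u$ for all $m$, whereas you first make the sequence of open sets decreasing and then need only one radius $\e_n$ per level $n$ — both devices do the same job.
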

\begin{proof}
Let $R=\bigcap_{n\in\w}O_m$, where each $O_m$ is open. For every $m$ we can find
$u_m\in\zrost$ such that for every $n$ and $x\in 2^\w$, if
there exists $q\in Q_n$ such that $|q-x|<1/u_m(n)$, then $x\in O_m$.
Now any $u\in \zrost$ such that $u_m\leq^*u$ for all $m$, is as required.
\end{proof}

The following lemma is the key part of the proof of
Theorem~\ref{main}. Its proof is reminiscent of that of
\cite[Theorem~3.2]{MilTsa10}.
 We will use the notation from \cite{Lav76} with  only differences being  that
smaller conditions in a forcing poset  are supposed to
 carry more information about the generic filter, and the ground model is denoted by $V$.
We shall work in $V[G_{\w_2}]$, where $G_{\w_2}$ is
$\IP_{\w_2}$-generic and $\IP_{\w_2}$ is the iteration of length
$\w_2$ with countable supports of the Laver forcing, see
\cite{Lav76} for details. As usually, we shall denote $G_{\w_2}\cap
\IP_\alpha$ by $G_\alpha$, where $\alpha\leq \w_2$. Following
\cite{Lav76}, for a condition $p\in\IP_{\w_2}$ we denote by $p(0)\la
0\ra$ the root of the condition $p(0)$, the latter being the initial
coordinate of $p$.

A subset $C$ of $\w_2$ is called an \emph{$\w_1$-club} if it is
unbounded and for every $\alpha\in\w_2$ of cofinality $\w_1$, if
$C\cap\alpha$ is cofinal in $\alpha$ then $\alpha\in C$.

\begin{lemma} \label{laver}
In the Laver model,  if  $X\subset 2^\w$  has  countably-controlled complement $ 2^\w\setminus X$,
then $X$ satisfies $(\dagger)$.
\end{lemma}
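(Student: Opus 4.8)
The plan is to read the structure of $X$ off the iteration $\IP_{\w_2}$. I work in $V[G_{\w_2}]$ and use the standard facts that $\IP_{\w_2}$ is proper and $\w_2$-cc, that every real has a name with support of size $\le\w_1$, and hence that $X=\bigcup_{\alpha<\w_2}(X\cap V[G_\alpha])$ with $|V[G_\alpha]\cap 2^\w|\le\w_1$ for each $\alpha<\w_2$. Fix the adversarial assignment $R$. For a countable seed $Q=\bigcup_nQ_n\sbst X$, Observation~\ref{obs01} supplies $u_Q\in\zrost$ with $R(\la Q_n:n\in\w\ra,u_Q)\sbst R(Q)$, and I set
\[ K(Q)=\{x\in 2^\w:\exists m\;\forall n\ge m\;\exists q\in Q_n\;(|x-q|<1/u_Q(n))\}, \]
an $F_\sigma$ (hence $\sigma$-compact) subset of $R(Q)$. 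Thus $x\in K(Q)$ means that $x$ is \emph{eventually} $1/u_Q(n)$-close to $Q_n$, and $(\dagger)$ reduces to producing an $\w_1$-sized family of such seeds whose associated $K(Q)$ cover $X$.

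To keep to only $\w_1$ seeds I would fix, inside a suitable $\w_1$-club, an ordinal $\alpha^*<\w_2$ with $\cf(\alpha^*)=\w_1$, so that $V[G_{\alpha^*}]$ contains at most $\w_1$ reals, and attach to each new point $x\in X\sm V[G_{\alpha^*}]$ a seed built solely from a ground-model (i.e.\ $V[G_{\alpha^*}]$) combinatorial datum, namely a slalom capturing the initial segments of $x$. Since there are only $\w_1$ such data, the resulting family $\mathsf Q$ has size $\le\w_1$. The $\le\w_1$ functions $u_Q$ then all appear below some $\gamma^*<\w_2$ (as $\cf(\w_2)>\w_1$); the $\le\w_1$ points of $X$ lying in $V[G_{\gamma^*}]$ I would simply place in their own singleton seeds $\{x\}\sbst X$, each being captured with room to spare, so that the real work concerns the $x\in X$ appearing at stages $\beta>\gamma^*$, for which $u_Q\in V[G_\beta]$ and the stage-$\beta$ Laver real is dominating over it.

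The core of the proof, modelled on that of \cite[Theorem~3.2]{MilTsa10}, is the capture of such a late $x$. Representing $x$ by a name over $V[G_{\alpha^*}]$ and running a fusion through the Laver conditions, the Laver property yields a ground-model slalom assigning to each block a finite set of candidate initial segments of $x$; because the Laver real dominates $u_Q$, the blocks can be taken long enough that agreement on a block forces $1/u_Q(n)$-closeness. Extending the (true) candidate segments to genuine points of $X\cap V[G_{\alpha^*}]$ then produces finite sets $Q_n$ with $x$ eventually $1/u_Q(n)$-close to $Q_n$, i.e.\ $x\in K(Q)$ for $Q=\bigcup_nQ_n$. The hypothesis that $2^\w\sm X$ is countably controlled is meant to enter precisely here: it is what should allow the chosen candidate segments to be extended \emph{inside} $X$, since otherwise cofinitely many approximants would lie in $2^\w\sm X$ and could be gathered into a countable subset of the complement whose prescribed $G_\delta$-control, pulled back through the name for $x$, is incompatible with $x\in X$.

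Finally I would collect the $\le\w_1$ slalom-seeds together with the $\le\w_1$ singleton seeds and verify $X\sbst\bigcup_{Q\in\mathsf Q}K(Q)$. The step I expect to be the main obstacle is the simultaneous control, within a single fusion, of the adversary's freedom to make the $G_\delta$ sets $R(Q)$ arbitrarily thin (so that $u_Q$ grows arbitrarily fast) and of the demand that the extracted approximants stay in $X$: reconciling these is exactly where the \emph{dominating} (not merely unbounded) nature of the Laver real and the countable control of $2^\w\sm X$ must be combined, and where the delicate bookkeeping of names and supports below $\alpha^*$, in the spirit of \cite{Lav76} and \cite{MilTsa10}, appears unavoidable.
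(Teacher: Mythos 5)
Your overall strategy --- reduce to $\omega_1$ many seeds coming from an intermediate model, analyse the name for a ``late'' point $x$ \`a la Laver to extract finite sets of approximants at each node of a condition, use the countable control of $2^\w\sm X$ to force these approximants into $X$, and thereby trap $x$ in a $\sigma$-compact subset of $R(Q)$ for a seed $Q$ from the intermediate model --- is indeed the strategy of the paper's proof, and you have correctly located where the hypothesis on the complement must enter. But the proposal stops short of a proof at exactly the step you yourself flag as the main obstacle, and the route you propose for that step does not work as stated. You want to fix the scale function $u_Q$ \emph{before} running the fusion, so that ``the blocks can be taken long enough that agreement on a block forces $1/u_Q(n)$-closeness''; but $u_Q$ comes from Observation~\ref{obs01} applied to $R(Q)$, the seed $Q$ is built from the approximants that the fusion produces, and the adversary chooses $R(Q)$ only as a function of $Q$ --- so the block lengths cannot be pre-committed, and this circularity is not resolved in your sketch. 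The paper dissolves it by using \cite[Lemma~14]{Lav76} in its uniform form: one gets a single condition $p'$ and finite sets $U_s$ of \emph{reals} (not initial segments) such that for \emph{every} $\varepsilon>0$, all but finitely many immediate successors $t$ of $s$ force $\name{x}$ to be $\varepsilon$-close to $U_s$. The sets $U_s$ are thus fixed once and for all, the seed $Q=\bigcup_n (U_{t_n}\cap X)$ and hence $\psi$ (your $u_Q$) are computed from them afterwards, and the required $1/2\psi(n)$-closeness is then achieved merely by discarding finitely many successors of each node --- no domination of $u_Q$ by the Laver generic is needed.

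The second gap is the passage from approximants to approximants \emph{in} $X$. You propose to ``extend the (true) candidate segments to genuine points of $X\cap V[G_{\alpha^*}]$'' and assert that otherwise countable control of the complement is violated; but an initial segment carries no information about whether any of its extensions meets $X$, and nothing in your sketch produces the required extensions. The paper's argument is quantitative and runs in the opposite direction: assume WLOG $U_s\sm X\neq\emptyset$, apply countable control to the countable set $\bigcup_n(U_{s_n}\sm X)$ to obtain $\phi$ with $R(\la U_{s_n}\sm X:n\in\w\ra,\phi)$ disjoint from $X$; since $p$ forces $\name{x}\in\name{X}$, a name $\name{N}$ can be decided so that $\name{x}$ is kept at distance $\geq 1/\phi(n)$ from every point of $U_{s_n}\sm X$ for relevant $n$, while the pruned condition forces an approximant within $1/(\phi(n)+1)$ --- which must therefore lie in $U_{s_n}\cap X$. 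Without this comparison of the two scales $1/\phi(n)$ and $1/(\phi(n)+1)$ and the deciding of $\name{N}$, the claim that the approximants can be taken inside $X$ is unsupported. In short: right skeleton, but the two load-bearing steps are missing, and one of them is set up in a way (pre-chosen slalom blocks) that cannot be completed without reorganizing the argument as above.
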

\begin{proof}
Let $R$ be such as in the definition of $(\dagger)$.
By a standard closing-off argument
there exists an $\w_1$-club $C\subset \w_2$ such that for every $\alpha\in C$
the following conditions are satisfied:
\begin{itemize}
\item $X\cap V[G_\alpha]\in V[G_\alpha]$;
\item For every $Q\in [X]^\w\cap V[G_\alpha]$ the set $R(Q)$ is coded in $V[G_\alpha]$; and
\item For every $Q\in [ 2^\w]^\w\cap V[G_\alpha] $ disjoint from $X$
there exists a $G_\delta$ set $O\supset Q$ coded in $V[G_\alpha]$ such that
$O\cap X=\emptyset$.
\end{itemize}

Let us fix $\alpha\in C$.
We claim that
$X\subset \bigcup_{Q\in \mathsf Q}W_Q$,
where $\mathsf Q=[X]^\w\cap V[G_\alpha]$ and
$W_Q=\bigcup\{K:K \mbox{ is a compact subset of }R(Q)\mbox{ coded in }V[G_{\alpha+1}]\}.$
This would complete our proof since it is easy to see (and well-known) that for every
$G_\delta$ subset $R$ of $ 2^\w$ and a family $\mathcal W$ of fewer than $\hot b$ many compact
subspaces of $R$, there exists a $\sigma$-compact subspace $K$ of $R$ containing $\bigcup\mathcal W$.
Note that $\hot b=\w_2$ in $V[G_{\w_2}]$ and there are $\w_1$ many compact sets coded
in $V[G_{\alpha+1}]$.

Suppose that, contrary to our claim, there exists $p\in G_{\w_2}$ and a $\IP_{\w_2}$-name $\name{x}$
such that $p$ forces $ 2^\w\setminus \name{X}$ to be countably-controlled and $\name{x}\in\name{X}\setminus \bigcup_{Q\in \mathsf Q}W_Q$.
By \cite[Lemma~11]{Lav76}
there is no loss of generality
in assuming that $\alpha=0$. Applying \cite[Lemma~14]{Lav76}
to a sequence $\la \name{a}_i:i\in\w\ra$ such that $\name{a}_i=\name{x}$ for all $i\in\w$,
we get a condition $p'\leq p$ such that $p'(0)\leq^0 p(0)$, and a finite set $U_s$
of reals for every $s\in p'(0)$ with $p'(0)\la 0\ra\leq s$, such that for each $\varepsilon>0$,
  $s\in p'(0)$ with $p'(0)\la 0\ra\leq s$, and for all but finitely many
immediate successors $t$ of $s$ in $p'(0)$ we have
\begin{equation} \label{eq00}
  p'(0)_t\bigvid p'\uhr[1,\w_2)\forces \exists u\in U_s\: (|\name{x}-u|<\varepsilon).
\end{equation}
Without loss of generality we may assume that $U_s\setminus X\neq\emptyset$ for all
$s\in p'(0),$ $p'(0)\la 0\ra\leq s$.
 Since $ 2^\w\setminus X$ is countably-controlled,
there exists an enumeration $\la s_n:n\in\w\ra$ of $S':=\{s\in
p'(0): p'(0)\la 0\ra\leq s\}$ and $\phi\in\zrost\cap V $ such that
$R':=R(\la U_{s_n}\setminus X:n\in\w\ra,\phi)$ is disjoint from $X$.
Fix a   $\IP_{\w_2}$-name $\name{N}$ for a natural number such that
$p'$ forces that $|\name{x}-u|\geq 1/\phi(n)$ for all
$n\geq\name{N}$ and $u\in U_{s_n}\setminus \name{X}$. Let $p''\leq
p'$ be such that $p''\uhr[1,\w_2)=p'\uhr[1,\w_2)$ and $p''(0)$ is
obtained from $p'(0)$ by throwing away for every $s_n\in S'$ finitely
many of its successors (as well as all their extensions in $p'(0)$)
so that
\begin{equation} \label{eq01}
  p''(0)_{s_n}\bigvid p''\uhr[1,\w_2)\forces \exists u\in U_{s_n}\: \big(|\name{x}-u|<1/(\phi(n)+1)\big)
\end{equation}
for all $n$ such that $s_n\in S'':=\{s\in p''(0): p''(0)\la 0\ra\leq
s\}$. Replacing $p''$ with a yet stronger condition, if necessary,
we may additionally assume that $p''$ decides $\name{N}$ as some
$N\in\w$, and $s_n\in S''$ implies $n\geq N$. Thus we have that
\begin{equation}\label{eq02}
  p''\forces \forall u\in U_{s_n}\setminus X\: \big(|\name{x}-u|\geq 1/\phi(n)\big)
\end{equation}
for all $n$ such that $s_n\in S''$.
Combining Equations~(\ref{eq00}),(\ref{eq01}), and (\ref{eq02}) we get
that for each $\varepsilon>0$,
  $s\in S''$, and for all but finitely many
immediate successors $t$ of $s$ in $p''(0)$ we have
\begin{equation} \label{eq03}
  p''(0)_t\bigvid p''\uhr[1,\w_2)\forces \exists u\in U_s\cap X\: (|\name{x}-u|<\varepsilon).
\end{equation}
Fix an enumeration $\la t_n:n\in\w\ra\in V$ of $S''$ and find
$\psi\in\zrost\cap V$ such that $R(\la U_{t_n}\cap
X:n\in\w\ra,\psi)\subset R(\bigcup_{n\in\w}U_{t_n}\cap X)$.
Replacing $p''$ with a  stronger condition $p^{(3)}$ obtained
by throwing away
 for every $t_n\in S''$ finitely
many of its successors (as well as all their extensions in $p''(0)$),
 we can get
\begin{equation}\label{eq04}
  p^{(3)}_{t_n}\bigvid p^{(3)}\uhr[1,\w_2)\forces \exists u\in U_{t_n}\cap X\: \big(|\name{x}-u|< 1/2\psi(n)\big)
\end{equation}
 for all  $t_n \geq p^{(3)}(0)\la 0\ra$, $t_n\in p^{(3)}(0)$
(the set of such $t_n$ will be denoted by $S^{(3)}$).
Let $l$ be the first Laver generic, i.e., $l=\bigcap G_1\subset\w^{<\w}$.
Equation~(\ref{eq04}) implies that $p^{(3)}$
forces
$$ \name{x}\in K:=\bigcap_{t_n\in l\cap S^{(3)}}\{z\in 2^\w:\exists u\in U_{t_n}\:(|z-u|\leq 1/2\psi(n))\},$$
and $K$ is obviously a compact subset of $R(\la U_{t_n}\cap
X:n\in\w\ra,\psi)\subset R(\bigcup_{n\in\w}U_{t_n}\cap X)$
coded in $V[G_1]$. This contradicts our assumption on $p$ and thus finishes our proof.
\end{proof}

The next lemma has been proved in \cite{RepZdo17}.  We present
its proof for the sake of completeness.

\begin{lemma} \label{covering_g_delta}
Let $Y\subset  2^\w$ be  Hurewicz  and $Q\subset 2^\w$
 countable. Then for every $G_\delta$-subset $O$ of $ (2^\w)^2$ containing
$Q\times Y$ there exists  a $G_\delta$-subset $R\supset Q$ such that
$R\times Y\subset O$.
\end{lemma}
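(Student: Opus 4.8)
The plan is to replace the Hurewicz space $Y$ by a $\sigma$-compact superset and then fatten $Q$ by a tube-lemma argument. First I would fix the $G_\delta$ structure $O=\bigcap_{j\in\w}O_j$ with the $O_j$ open and \emph{decreasing}, and enumerate $Q=\{q_k:k\in\w\}$. For each $k$ the section $H_k:=\{y\in 2^\w:(q_k,y)\in O\}$ is a $G_\delta$ subset of $2^\w$ (sections of open sets are open, and sections commute with countable intersections), and $Y\sbst H_k$ since $\{q_k\}\times Y\sbst Q\times Y\sbst O$. Hence $H:=\bigcap_{k\in\w}H_k$ is again $G_\delta$ and contains $Y$. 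At this point I would invoke the classical characterization of the Hurewicz property (valid because $2^\w$ is compact metric, so that $F_\sigma=\sigma$-compact): a subspace of $2^\w$ is Hurewicz iff every $G_\delta$ set containing it contains a $\sigma$-compact set containing it. Applied to $Y\sbst H$, this yields a $\sigma$-compact $F=\bigcup_{m\in\w}K_m$, with $K_m$ compact and increasing, such that $Y\sbst F\sbst H$; equivalently $Q\times F\sbst O$.

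The decisive gain is that the single $\sigma$-compact set $F$, and its compact pieces $K_m$, now work \emph{simultaneously} for every $q_k$. Next I would apply the tube lemma: for all $k,m,j$, since $\{q_k\}\times K_m\sbst O_j$ with $K_m$ compact and $O_j$ open, there is an open $V_{k,m,j}\ni q_k$ with $V_{k,m,j}\times K_m\sbst O_j$. Define $R_j:=\bigcup_{k\in\w}\bigcap_{m\le j}V_{k,m,j}$ and $R:=\bigcap_{j\in\w}R_j$. Each $R_j$ is open and contains every $q_k$, so $R$ is a $G_\delta$ set containing $Q$. To verify $R\times Y\sbst O$, fix $x\in R$ and $y\in Y$. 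As $y\in F$, pick $m_0$ with $y\in K_{m_0}$. For every $j\ge m_0$, membership $x\in R_j$ gives some index $k_j$ with $x\in\bigcap_{m\le j}V_{k_j,m,j}$, in particular $x\in V_{k_j,m_0,j}$; since $V_{k_j,m_0,j}\times K_{m_0}\sbst O_j$ and $y\in K_{m_0}$, we get $(x,y)\in O_j$. Thus $(x,y)\in O_j$ for all $j\ge m_0$, and because the $O_j$ decrease this gives $(x,y)\in\bigcap_{j}O_j=O$, as required.

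The step I expect to be the main obstacle -- and the reason for intersecting the sections in the first paragraph -- is the uniformity of $F$ over $k$. If one instead ran the single-point argument separately for each $q_k$, one would obtain a $G_\delta$ set $R_k\ni q_k$ for each $k$, but $\bigcup_k R_k$ need not be $G_\delta$, and along a single $x$ the ``witnessing'' index $k_j$ may change with $j$. The construction above survives this change precisely because the compact piece $K_{m_0}$ depends only on $y$, not on $k_j$, so the tube $V_{k_j,m_0,j}$ does the job no matter which $q_{k_j}$ happens to be close to $x$. The only external input is the $\sigma$-compact-inside-$G_\delta$ characterization of Hurewicz; should a self-contained argument be preferred, one could instead feed the open covers $\{B\text{ clopen}:B\sbst O_j\text{-section}\}$ of $Y$ directly into the definition of the Hurewicz property to manufacture the $K_m$ and the $V_{k,m,j}$ by hand, at the cost of heavier bookkeeping.
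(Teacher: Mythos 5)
Your proof is correct and follows essentially the same route as the paper's: interpolate a $\sigma$-compact set $F$ between $Y$ and the intersection of the sections of $O$ over the points of $Q$, then apply the tube lemma to the compact pieces and intersect the resulting open neighbourhoods of $Q$. The only cosmetic differences are that the paper first reduces to the case of open $O$ and manufactures $F$ directly from the definition of the Hurewicz property (the alternative you sketch in your final sentence), whereas you keep the $G_\delta$ structure throughout and cite the $\sigma$-compact-inside-$G_\delta$ characterization, which is precisely the equivalence $(1)\Leftrightarrow(4)$ of Proposition~\ref{prop_char} used elsewhere in the paper.
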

\begin{proof}
 Without loss of generality we shall assume that $O$ is open. Let us write
$Q$ in the form $\{q_n:n\in\w\}$ and set $O_n=\{z\in 2^\w:\la
q_n,z\ra\in O\}\supset Y$. For every $n$ find a cover $\U_n$ of $Y$
consisting of clopen subsets of $ 2^\w$ contained in $O_n$. Let
$\la\U'_k:k\in\w\ra$ be a sequence of open covers of $Y$ such that
each $\U_n$ appears in it infinitely often. Applying the Hurewicz
property of $Y$ we can find a sequence $\la \V_k:k\in\w\ra$ such
that $\V_k\in [\U'_k]^{<\w}$ and $Y\subset \bigcup_{k\in\w} Z_k$,
where $Z_k=\bigcap_{m\geq k}\cup\V_m$. Note that each $Z_k$ is
compact and $Z_k\subset O_n$ for all $n\in\w $ (because there exists
$m\geq k$ such that $\U'_m=\U_n$, and then
$Z_k\subset\cup\V_m\subset O_n$). Thus $Q\times Y\subset Q\times
(\bigcup_{k\in\w}Z_k)\subset O$. Since $Z_k$ is compact, there
exists for every $k$  an open $R_k\supset Q$ such that $R_k\times
Z_k\subset O$. Set $R=\bigcap_{k\in\w}R_k$ and note that $R\supset
Q$ and $R\times Y\subset R\times\bigcup_{k\in\w}Z_k\subset O$.
\end{proof}

The next lemma demonstrates the relation between $(\dagger)$
and products with Hurewicz spaces.

\begin{lemma} \label{main1}
 Suppose that $\hot b>\w_1$. Let $Y \subset  2^\w$ be a Hurewicz  space
and $X\subset  2^\w$  satisfy $(\dagger)$.  Then $X\times Y$ is Hurewicz.
\end{lemma}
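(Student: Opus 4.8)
The plan is to reduce everything to the well-known characterization of the Hurewicz property among subspaces of a Polish space: a subspace $Z$ of $(2^\w)^2$ is Hurewicz if and only if for every $G_\delta$-subset $O$ of $(2^\w)^2$ with $Z\subset O$ there is a $\sigma$-compact set $F$ with $Z\subset F\subset O$ (see \cite{COC2}). So I fix an arbitrary $G_\delta$-set $O\supset X\times Y$ and aim to produce such an $F$ for $Z=X\times Y$. For every countable $Q\subset X$ we have $Q\times Y\subset X\times Y\subset O$, so Lemma~\ref{covering_g_delta} supplies a $G_\delta$-set $R(Q)\supset Q$ with $R(Q)\times Y\subset O$. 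The assignment $Q\mapsto R(Q)$ is exactly the type of function to which property $(\dagger)$ applies; feeding it into $(\dagger)$ yields a family $\mathsf Q\subset[X]^\w$ with $|\mathsf Q|=\w_1$ together with $\sigma$-compact sets $K(Q)\subset R(Q)$ for $Q\in\mathsf Q$ such that $X\subset\bigcup_{Q\in\mathsf Q}K(Q)$.

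The next step is to assemble these pieces into a single Hurewicz superset of $X\times Y$ lying inside $O$. For each $Q\in\mathsf Q$ the set $K(Q)\times Y$ is the product of a $\sigma$-compact space with the Hurewicz space $Y$, hence is itself Hurewicz (a compact factor preserves the Hurewicz property, and a countable union of Hurewicz spaces is Hurewicz); moreover $K(Q)\times Y\subset R(Q)\times Y\subset O$. Since $X\subset\bigcup_{Q\in\mathsf Q}K(Q)$, the set $H:=\bigcup_{Q\in\mathsf Q}\big(K(Q)\times Y\big)$ satisfies $X\times Y\subset H\subset O$. Here is where the hypothesis $\hot b>\w_1$ enters: $H$ is a union of $|\mathsf Q|=\w_1<\hot b$ many Hurewicz subspaces of $(2^\w)^2$, and a union of fewer than $\hot b$ Hurewicz spaces is again Hurewicz. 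This last additivity fact follows from the characterization of the Hurewicz property by $\leq^*$-boundedness of continuous images in $\w^\w$, together with the fact that $\hot b$ bounds any family of fewer than $\hot b$ bounded subsets of $\w^\w$. Consequently $H$ is Hurewicz.

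Finally, I apply the $\sigma$-compact sandwich characterization a second time, now to the Hurewicz space $H$ sitting inside the $G_\delta$-set $O$, to obtain a $\sigma$-compact $F$ with $H\subset F\subset O$. Then $X\times Y\subset H\subset F\subset O$, which is precisely the approximation demanded by the characterization for $Z=X\times Y$; hence $X\times Y$ is Hurewicz.

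I expect the main obstacle to be the passage from the $\w_1$-indexed cover of $X\times Y$ by the sets $K(Q)\times Y$ to a single $\sigma$-compact set trapped between $X\times Y$ and $O$, since a naive union of $\w_1$ many $\sigma$-compact approximations need not be $\sigma$-compact. The way around it is to first collapse the $\w_1$ Hurewicz pieces into one Hurewicz space $H$ using the additivity of the Hurewicz property below $\hot b$ --- which is exactly where $\hot b>\w_1$ is indispensable --- and only afterwards extract a single $\sigma$-compact $F$ from $H$. Verifying that $\sigma$-compact $\times$ Hurewicz is Hurewicz and recording the two characterizations of the Hurewicz property are routine, but they must be stated with care so that each inclusion $K(Q)\times Y\subset O$ is genuinely guaranteed by Lemma~\ref{covering_g_delta} and by the condition $K(Q)\subset R(Q)$ coming from $(\dagger)$.
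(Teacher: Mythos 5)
Your proposal is correct and follows essentially the same route as the paper: Lemma~\ref{covering_g_delta} produces the sets $R(Q)$, property $(\dagger)$ yields the $\sigma$-compact pieces $K(Q)$, and the conclusion rests on ``compact $\times$ Hurewicz is Hurewicz'' together with additivity of the Hurewicz property below $\hot b$, which is exactly where $\hot b>\w_1$ is used in the paper as well. The only (harmless) difference is cosmetic: the paper starts from a sequence of clopen covers $\la\U_n:n\in\w\ra$ and extracts the finite subfamilies $\V_n$ directly from the Hurewicz property of $K\times Y$, whereas you route both the setup and the conclusion through the $\sigma$-compact/$G_\delta$ sandwich characterization (the equivalence $(1)\Leftrightarrow(4)$ of Proposition~\ref{prop_char}, valid in ZFC), which is equivalent in substance.
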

\begin{proof}
Fix a sequence $\la \U_n:n\in\w\ra$ of  covers of $X\times Y$
by clopen subsets of $ (2^\w)^2$.
Thus $X\times Y\subset\bigcap_{n\in\w}O_n$, where $O_n=\cup\U_n$.
By   Lemma~\ref{covering_g_delta} for every
 $Q\in [X]^\w $
we can find  a $G_\delta$-subset $R_{Q}\supset Q$  such that
$R_{Q}\times Y\subset \bigcap_{n\in\w}O_n$. Since $X$ satisfies $(\dagger)$, there exists
$\mathsf Q\subset [X]^\w$ of size $|\mathsf Q|=\w_1$,
and for every $Q\in\mathsf Q$ a $\sigma$-compact $K_Q\subset R_Q$,
 such that  $ K =\bigcup\{ K_{Q}  : Q\in\mathsf Q\} $ contains $X$ as a subset.
It follows that $X\times Y\subset K\times Y\subset \bigcap_{n\in\w}O_n$.
It is well-known and easy to see that a product of a compact space and a Hurewicz space
is Hurewicz, as well as that the Hurewicz property is preserved by
unions of fewer than $\hot b$ many subsets of $ 2^\w$, see \cite{Tsa06} and references therein.
Thus $K\times Y$ is Hurewicz and $\U_n$ is an open cover thereof, therefore for all $n\in\w$ there exists $\V_n\in [\U_n]^{<\w}$ such that
$\{\cup\V_n:n\in\w\}$ is a $\gamma$-cover of $K\times Y$, and hence it is
also a $\gamma$-cover of $X\times Y$ as well.
\end{proof}

Finally, we can prove the characterization of Hurewicz subspaces of $ 2^\w$ which holds in the
Laver model and implies Theorem~\ref{main}.

\begin{proposition}\label{prop_char}
In the Laver model, for a subspace $X$ of $ 2^\w$ the following conditions are equivalent:
\begin{enumerate}
\item $X$ is Hurewicz; 
\item $X$ satisfies $(\dagger)$; 
\item $ 2^\w\setminus X$ is countably controlled; 
\item $ 2^\w\setminus X$ is $\sigma$-compactly  controlled; and 
\item $X\times Y$ is Hurewicz for any Hurewicz subspace $Y$ of $ 2^\w$. 
\end{enumerate}
 \end{proposition}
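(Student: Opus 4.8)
The plan is to prove the single cyclic chain
$(2)\Rightarrow(5)\Rightarrow(1)\Rightarrow(4)\Rightarrow(3)\Rightarrow(2)$,
which yields the equivalence of all five conditions at once. Two of the links are already available: $(2)\Rightarrow(5)$ is Lemma~\ref{main1}, applied with the fact that $\hot b=\w_2>\w_1$ in the Laver model, and $(3)\Rightarrow(2)$ is exactly Lemma~\ref{laver}. The latter is the only place where the specific combinatorics of the Laver iteration are used, so all of the genuine forcing content of the proposition is already absorbed into that lemma; what is left is topological bookkeeping valid in ZFC.

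I would dispose of the two formal links first. For $(5)\Rightarrow(1)$, let $Y$ be a single point of $2^\w$; it is compact, hence Hurewicz, and $X\times Y$ is homeomorphic to $X$, so $(5)$ forces $X$ itself to be Hurewicz. For $(4)\Rightarrow(3)$ I observe only that every countable subset of $2^\w\setminus X$ is $\sigma$-compact, so that $\sigma$-compact control of the complement is formally stronger than countable control.

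The one link carrying real topological weight is $(1)\Rightarrow(4)$. Here I would invoke the classical $G_\delta$-characterization of the Hurewicz property: a subspace $Z$ of $2^\w$ is Hurewicz if and only if every $G_\delta$-set $H\subset 2^\w$ with $Z\subset H$ contains a $\sigma$-compact set $F$ with $Z\subset F\subset H$. The nontrivial direction is proved by an argument of the same flavour as Lemma~\ref{covering_g_delta}: writing $H=\bigcap_n U_n$ with the $U_n$ open and decreasing, cover $X$ at stage $n$ by the clopen subsets of $2^\w$ contained in $U_n$, apply the Hurewicz selection to this sequence of clopen covers to obtain finite subfamilies $\V_n$, and assemble the resulting $\gamma$-cover into the compact sets $F_k=\bigcap_{m\ge k}\cup\V_m$, whose union is a $\sigma$-compact set between $X$ and $H$. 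Granting this, let $Q\subset 2^\w\setminus X$ be $\sigma$-compact; then $2^\w\setminus Q$ is a $G_\delta$-set containing the Hurewicz space $X$, so there is a $\sigma$-compact $F$ with $X\subset F\subset 2^\w\setminus Q$, and $R:= 2^\w\setminus F$ is a $G_\delta$-set with $Q\subset R\subset 2^\w\setminus X$, witnessing $\sigma$-compact control of $2^\w\setminus X$.

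The main obstacle, to the extent there is one at this level, is precisely the nontrivial direction of the $G_\delta$-characterization used in $(1)\Rightarrow(4)$; every other step is either one of the quoted lemmas or an immediate observation. Thus, once Lemma~\ref{laver} and Lemma~\ref{main1} are in hand, the role of this proposition is to package them, together with the elementary $\sigma$-compact separation property of Hurewicz subspaces of $2^\w$, into a single list of equivalent conditions, with no further appeal to the Laver model.
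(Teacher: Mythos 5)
Your proposal is correct and follows essentially the same route as the paper: the cycle $(1)\to(4)\to(3)\to(2)\to(5)\to(1)$, with Lemma~\ref{laver} supplying $(3)\to(2)$, Lemma~\ref{main1} (plus $\hot b>\w_1$ in the Laver model) supplying $(2)\to(5)$, and the link $(1)\to(4)$ coming from the standard $G_\delta$/$\sigma$-compact interpolation characterization of the Hurewicz property, which the paper simply cites (\cite[Theorem~5.7]{COC2}, \cite[Theorem~1.2]{BanZdo08}) and you re-derive by complementation. The only difference is that you spell out the proof of that cited characterization, which is unnecessary but harmless.
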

\begin{proof}
The equivalence between $(1)$ and $(4)$ is true in ZFC, see \cite[Theorem
5.7]{COC2} or \cite[Theorem 1.2]{BanZdo08}. The implication $(4)\to(3)$ is obvious outright in ZFC,
while $(3)\to (2)$ is established in Lemma~\ref{laver}. The implication $(2)\to (5)$ is proved in Lemma~\ref{main1}
and thus requires only $\hot b>\w_1$. And finally, $(5)\to (1)$ is again obvious in ZFC, take,
e.g., $Y$ to be a singleton.
\end{proof}

We are now in a position to present the
\medskip

\noindent\textit{Proof of Theorem~\ref{main_mad}.} \ By
Proposition~\ref{prop_char} it is enough to show that $\F(\A)^+$ is
countably controlled: This would give that $\mathcal P(\w)\setminus
\F(A)^+$ is Hurewicz, and the map $\mathcal P(\w)\ni
Z\mapsto\w\setminus Z$ is easily seen to be a homeomorphism between
$\F(A)$ and $\mathcal P(\w)\setminus \F(A)^+$. Let us fix a
countable $\mathcal X\subset\F(\A)^+$ and find countable infinite
$\A_0\subset\A$ such that $A\cap X$ is infinite for all $A\in\A_0$
and $X\in\mathcal X$. It follows that
$$ \{Z\in\mathcal P(\w):\forall A\in\A_0 (|A\cap Z|=\w)\} $$
is a $G_\delta$ subset of $\mathcal P(\w)$ which contains $\mathcal X$ and is contained in $\F(\A)^+$.
\hfill $\Box$
\medskip

By nearly the same argument as at the end of \cite{RepZdo17} ne can prove that  Theorem~\ref{main}
follows from Proposition~\ref{prop_char}. Again, we present
its proof for the sake of completeness.
A family $\F\subset[\w]^\w$ is called a \emph{semifilter} if for every
$F\in\F$ and $X\subset \w$, if $|F\setminus X|<\w$
then $X\in\F$.

The proof of  the first part of Theorem~\ref{main}  uses  the characterization
of the Hurewicz property  obtained in \cite{Zdo05}.
Let  $u=\la U_n : n\in\omega\ra$ be a sequence of subsets of a set $X$.
For every $x\in X$ let  $I_s(x,u,X)=\{n\in\omega:x\in U_n\}$. If every
$I_s(x,u,X)$ is infinite (the collection of all such sequences $u$ will be denoted
by $\Lambda_s(X)$), then we shall denote by $\mathcal U_s(u,X)$
the smallest semifilter on $\omega$ containing all $I_s(x,u,X)$.
By \cite[Theorem~3]{Zdo05},  a Lindel\"of topological space $X$  is Hurewicz if and only if
for every  $u \in\Lambda_s(X)$ consisting of open sets,
  the semifilter $\mathcal U_s(u,X)$ is Hurewicz.
The proof given there also works if we consider only those
$\la U_n : n\in\omega\ra\in\Lambda_s(X)$ such that all $U_n$'s belong to a given base of
$X$.

\medskip

\noindent\textit{Proof of Theorem~\ref{main}.} \
Suppose that $X,Y$ are Hurewicz spaces such that $X\times Y$ is Lindel\"of
and fix  $w=\la U_n\times V_n :n\in\w\ra\in\Lambda_s(X\times Y)$
consisting of open sets.
Set  $u=\la U_n:n\in\w\ra$,  $v=\la V_n:n\in\w\ra$, and note that
$u\in\Lambda_s(X)$ and $v\in\Lambda_s(Y)$.
It is easy to see
that
$$\U_s(w,X\times Y)=\{A\cap B: A\in \U_s(u,X), B\in \U_s(v,Y)\},$$
and hence $\U_s(w,X\times Y)$ is a continuous image of
$\U_s(u,X)\times \U_s(v,Y)$. By \cite[Theorem~3]{Zdo05} both of
latter ones are Hurewicz, considered as subspaces of $2^\w$, and hence their product is a Hurewicz space by
Proposition~\ref{prop_char}. Thus $\U_s(w,X\times
Y)$ is Hurewicz, being a continuous image of a Hurewicz space. It now
suffices to use \cite[Theorem~3]{Zdo05} again. \hfill $\Box$
\medskip


\end{document}